\newtheorem{theo}[subsection]{Theorem}
\newtheorem{lem}[subsection]{Lemma}
\newtheorem{prop}[subsection]{Proposition}
\newtheorem{corr}[subsection]{Corollary}
\newtheorem{exam}[subsection]{Example}
\newtheorem{rem}[subsection]{Remark}
\newcommand{\cC}{{\cal C}}
\newcommand{\cZ}{{\cal Z}}
\newcommand{\F}{{\bf F}}
\newcommand{\Vect}{{\cal V}{\it ect}}
\newcommand{\YD}{{\cal YD}}
\newcommand{\da}{\mbox{-}}
\newcommand{\se}{\mbox{:}}
\newcommand{\End}{{\cal E}{\it nd}}
\newcommand{\eq}{\mbox{\Large =}}
\begin{document}
\author{Alexei Davydov}
\title{Full centre of an $H$-module algebra}
\maketitle
\date{}
\begin{center}
Max Planck Institut f\"ur Mathematik\\
Vivatsgrasse 7,
53111 Bonn, Germany
\end{center}
\begin{center}
davydov@mpim-bonn.mpg.de
\end{center}
\begin{abstract}
We apply the full centre construction, defined in \cite{da}, to algebras in and module categories over categories of representations of Hopf algebras.
\newline
We obtain a compact formula for the full centre of a module algebra over a Hopf algebra. 
\end{abstract}
\tableofcontents
\section{Introduction}

Motivated by algebraic structures appearing in Rational Conformal Field Theory (see \cite{kr}) we studied in \cite{da} a construction associating to an algebra in a monoidal category a commutative algebra ({\em full centre}) in the monoidal centre of the monoidal category. In loc.cit. we established Morita invariance of this construction by extending it to module categories. 

In this note we apply the full centre construction to algebras in monoidal categories $H\da Mod$ of modules over a Hopf algebra $H$ ($H$-module algebras for short). 
It is known (see section \ref{moncen} and references therein) that the monoidal centre of the category of modules over a Hopf algebra $H$ is equivalent to the category of Yetter-Drinfeld modules over $H$.
The main result of this note (theorem \ref{main}) gives a closed formula for the full centre of an $H$-algebra:
\newline
{\bf Theorem (Main).}
Let $A$ be an $H$-module algebra.
\newline
Then its full centre $Z(A)$ of an $H$-module algebra $A$ coincides with the centraliser $C_{A\#H}(A)$. The Yetter-Drinfeld structure ($H$-action and $H$-coaction) on $C_{A\#H}(A)$ has the form:
$$h(\sum_ia_i\# g_i) = \sum_{(h),i}h_{(2)}(a_i)\#S^2(h_{(3)})g_iS(h_{(1)}),$$ $$\psi(\sum_ia_i\# g_i) = \sum_{i,(g_i)}S^{-1}((g_i)_{(2)})\otimes a_i\#(g_i)_{(1)},$$ where $\sum_ia_i\# g_i\in C_{A\#H}(A)$.
\newline
For the definition of the smash product see section \ref{halg}.

As an immediate corollary of the properties of the full centre established in \cite{da} we have the following (corollary \ref{cor}):
\newline
The centraliser $C_{A\#H}(A)$ is a commutative algebra in the braided category $\YD(H)$ of Yetter-Drinfeld modules over $H$.
\newline
Moreover, the centraliser $C_{A\#H}(A)$ is invariant with respect to Morita equivalences in $H\da Mod$, i.e. if $A$ and $B$ are two $H$-algebras and $M$ is a $B$-$A$-bimodule, equipped with compatible $H$-action, which induce an equivalence between categories of modules $M$$\otimes_A\da :{_{A}(H\da Mod)}\to{_{B}(H\da Mod)}$ then $C_{A\#H}(A)$ and $C_{B\#H}(B)$ are isomorphic as algebras in $\YD(H)$.

In particular, by examining the case $Z(k)$ we recover a result from \cite{cvz} that the opposite algebra $H^{op}$ of a Hopf algebra is a commutative algebra in the category of Yetter-Drinfeld modules for $H$ if we equip it with the adjoint $H$-action 
and $H$-coaction given by the coproduct. 

We conclude by applying our main theorem to the cases of group Hopf algebra and its dual. 

Throughout the paper we freely use definitions and results from \cite{da}.

\section{String diagrams conventions}

All Hopf algebras will be assumed to have invertible antipodes. Although we will frequently use Sweedler's notations for comultiplication and coaction, most of our computations will be done using string diagrammatic presentation. We read string diagrams from top to bottom. The string diagrams for multiplication, comultiplication, unit, counit, and antipode are: 

\bigskip
$\xygraph{ !{0;/r1.0pc/:;/u1.0pc/::}
*{} :@{-}[dr] *{\circ}
(
:@{-}[ru]
,
:@{-}[d]
}
\quad\quad\quad\quad\quad
\xygraph{ !{0;/r1.0pc/:;/u1.0pc/::}
*{} :@{-}[d] *{\circ}
(
:@{-}[ld]
,
:@{-}[rd]
}
\quad\quad\quad\quad\quad\quad
\xygraph{ !{0;/r1.0pc/:;/u1.0pc/::}
*{} :@{}[d] *{\circ}
:@{-}[d]
}
\quad\quad\quad\quad\quad\quad
\xygraph{ !{0;/r1.0pc/:;/u1.0pc/::}
*{} :@{-}[d] *{\circ}
:@{}[d]
}
\quad\quad\quad\quad\quad\quad
\xygraph{ !{0;/r1.0pc/:;/u1.0pc/::}
*{} :@{-}[d] *{\circ}
:@{-}[d]
}$
\bigskip
\bigskip\newline
respectively. 
The fact that the comultiplication is a homomorphism of algebras (or the fact that multiplication is a homomorphism of coalgebras) has the following diagrammatic form:
$$\begin{xy} (-25,0)*+!R{\xygraph{ !{0;/r1.2pc/:;/u1.2pc/::}
*{} :@{-}[d] *{\circ}
(
:@{-}[dd] *{\circ}
 (
 :@{-}[d]
 ,
 :@{-}[uurr]="r" *{\circ}
 :@{-}[u]
 )
,
:@{-}[ddrr] *{\circ}
 (
 :@{-}"r"
 ,
 :@{-}[d]
 )
}
}
\POS(3,-4)*{{\eq}}
\POS(20,0)*+!L{\xygraph{ !{0;/r1.2pc/:;/u1.2pc/::}
*{} :@{-}[dr] *{\circ}
(
:@{-}[ur]
,
:@{-}[dd] *{\circ}
 (
 :@{-}[dl]
 ,
 :@{-}[dr]
 )
)
}
}
\end{xy}
$$
\bigskip\bigskip
\newline
The antipode axioms have the form:
\bigskip
$$\begin{xy} (-45,0)*+!R{\xygraph{ !{0;/r2.0pc/:;/u2.0pc/::}
*{} :@{-}[d(.75)] *{\circ}
(
:@{-}@/^6pt/[dr] *{\circ}
:@{-}@/^6pt/[dl] *{\circ}="b"
:@{-}[d(.75)]
,
:@{-}@(l,l)"b"  
)
}
}
\POS(-23,-7)*{{\eq}}
\POS(-7,0)*+!L{\xygraph{ !{0;/r2.0pc/:;/u2.0pc/::}
*{} :@{-}[d] *{\circ}
:@{}[d(1.5)] *{\circ}
:@{-}[d]
}
}
\POS(12,-7)*{{\eq}}
\POS(35,0)*+!L{\xygraph{ !{0;/r2.0pc/:;/u2.0pc/::}
*{} :@{-}[d(.75)] *{\circ}
(
:@{-}@/_6pt/[dl] *{\circ}
:@{-}@/_6pt/[dr] *{\circ}="b"
:@{-}[d(.75)]
,
:@{-}@(r,r)"b"  
)
}
}
\end{xy}
$$
\bigskip
\bigskip
\bigskip

The proof of the following auxiliary statement is an example of string diagram arguments.
\begin{lem}\label{aux}
The following identity holds for any element $g,h$ of a Hopf algebra $H$:
$$\sum (gS(h_{(1)}))_{(1)}h_{(2)}\otimes(gS(h_{(1)}))_{(2)} = \sum_{(g)}g_{(1)}\otimes g_{(2)}S(h).$$
\end{lem}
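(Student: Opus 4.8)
The plan is to reduce the left-hand side to the right-hand side by distributing the outer comultiplication over the product $g\,S(h_{(1)})$ and then resolving the antipode against the surviving copy of $h$. First I would apply the bialgebra compatibility displayed above (comultiplication is an algebra homomorphism) to the iterated coproduct $(gS(h_{(1)}))_{(1)}\otimes(gS(h_{(1)}))_{(2)}=\Delta(gS(h_{(1)}))$, obtaining
$$\Delta(gS(h_{(1)}))=\sum g_{(1)}\,(S(h_{(1)}))_{(1)}\otimes g_{(2)}\,(S(h_{(1)}))_{(2)}.$$
In string-diagram terms this is the move that lets the final splitting node slide up through the multiplication node, turning one coproduct of a product into two coproducts running in parallel.

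The key step is then to push this comultiplication past the antipode on the $h_{(1)}$ strand. Here I would use the standard consequence of the two axioms above that the antipode reverses coproducts, $\Delta\circ S=(S\otimes S)\circ\tau\circ\Delta$ with $\tau$ the flip; diagrammatically the antipode box on a single wire splits into two antipode boxes on crossed wires. Combined with coassociativity, passing to the threefold coproduct $\sum h_{(1)}\otimes h_{(2)}\otimes h_{(3)}$ in which the original $h_{(1)}$ has been split and the original $h_{(2)}$ has become $h_{(3)}$, this rewrites the left-hand side as
$$\sum g_{(1)}\,S(h_{(2)})\,h_{(3)}\otimes g_{(2)}\,S(h_{(1)}).$$

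The computation then finishes by cancellation. The subword $S(h_{(2)})\,h_{(3)}$ is exactly the input of the antipode axiom $m\circ(S\otimes\mathrm{id})\circ\Delta=\eta\epsilon$ applied to the grouped middle factor $h_{(2)}\otimes h_{(3)}$, so it collapses to $\epsilon(h_{(2)})\,1$ (in twofold notation, after regrouping); absorbing this scalar through the counit axiom merges the remaining two $h$-indices via $\sum S(h_{(1)})\,\epsilon(h_{(2)})=S(h)$, leaving precisely $\sum_{(g)}g_{(1)}\otimes g_{(2)}S(h)$. In diagram language these last two moves are the creation of a cap by the antipode axiom followed by straightening the bent wire using the counit. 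I expect the main obstacle to be bookkeeping rather than conceptual: keeping the Sweedler indices of $h$ aligned through the order-reversal built into $\Delta\circ S$, so that the antipode axiom is applied to the correct adjacent pair $S(h_{(2)})h_{(3)}$ while the factor $S(h_{(1)})$ is left untouched. The string-diagram presentation is well suited to this, since the crossings produced by the antipode's order-reversal and by the bialgebra axiom make the cancelling pair visibly adjacent and the surviving antipode strand visibly separate.
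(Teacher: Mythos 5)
Your proof is correct and follows essentially the same route as the paper's diagrammatic argument: distribute $\Delta$ over the product via the bialgebra axiom, use anticomultiplicativity of $S$ together with coassociativity to produce the adjacent pair $S(h_{(2)})h_{(3)}$, cancel it by the antipode axiom, and absorb the resulting counit factor to obtain $\sum_{(g)}g_{(1)}\otimes g_{(2)}S(h)$. The paper's string-diagram moves encode exactly this sequence of steps, so there is nothing to add.
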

\begin{proof}

$$\begin{xy} (-135,0)*+!R{
\xygraph{ !{0;/r1.5pc/:;/u1.5pc/::}
*{} :@{-}[ddr] *{\circ}
(
:@{-}[d] *{\circ}
 (
 :@{-}[d(1)l(.5)]="l" *{\circ}
 :@{-}[d(1)l(.5)]
 ,
 :@{-}[ddr]
 )
,
:@{-}[u(.5)r(.25)] *{\circ}
:@{-}[u(.5)r(.25)] *{\circ}
 (
 :@{-}[u(1)r(.5)]
 ,
 :@{-}@/^10pt/"l"
 )
)
}
}
\POS(-110,-11)*{\eq}
\POS(-100,0)*+!L{
\xygraph{ !{0;/r1.6pc/:;/u1.6pc/::}
*{} :@{-}[dd] *{\circ}
(
:@{-}[d] *{\circ}
 (
 :@{-}[r(1.7)u] *{\circ}
  (
  :@{-}[u(.5)] *{\circ}
  :@{-}[u(.5)]="u" *{\circ}
  :@{-}[u]
  ,
  :@{-}[d]="m" *{\circ}
  :@{-}[dd]
  )
 ,
 :@{-}[d] *{\circ}
  (
  :@{-}[d]
  ,
  :@{-}@/_25pt/"u"
  )
 )
,
:@{-}"m"
)
}
}
\POS(-75,-11)*{\eq}
\POS(-65,0)*+!L{
\xygraph{ !{0;/r1.6pc/:;/u1.6pc/::}
*{} :@{-}[dd] *{\circ}
(
:@{-}[dd] *{\circ}
 (
 :@{-}[d]
 ,
 :@{-}[u(.5)r(.5)]="l" *{\circ}
 :@{-}[u(1)r(1)] *{\circ}
 :@{-}@/_5pt/[r(.5)u(1)]="u" *{\circ}
 :@{-}[u(.5)] *{\circ}
  (
  :@{-}[u]
  ,
  :@{-}@/^20pt/"l"
  )
 )
,
:@{-}[ddrr] *{\circ}
 (
 :@{-}[d]
 ,
 :@{-}[u(1.5)] *{\circ}
 :@{-}@/^3pt/"u"
 )
)
}
}
\POS(-40,-11)*{\eq}
\end{xy}
$$
\bigskip
\bigskip
\bigskip
\bigskip

$$\begin{xy} 
\POS(-145,-11)*{\eq}
\POS(-135,0)*+!R{
\xygraph{ !{0;/r1.6pc/:;/u1.6pc/::}
*{} :@{-}[d] *{\circ}
(
:@{-}[ddd] *{\circ} 
 (
 :@{-}[d]
 ,
 :@{-}[u(.5)r(.5)] *{\circ}
  (
  :@{-}[u(.8)r(.8)] *{\circ}
  :@{-}[u(.5)r(.5)]="m" *{\circ}
  :@{-}@/_7pt/[r(.2)u(1.2)]="u" *{\circ}
  :@{-}[u]
  ,
  :@{-}@/_7pt/"m"
  )
 )
,
:@{-}@/_3pt/[dddrr] *{\circ}
 (
 :@{-}[d]
 ,
 :@{-}[u(1.5)] *{\circ}
 :@{-}"u"
 )
)
}
}
\POS(-110,-11)*{\eq}
\POS(-100,0)*+!L{
\xygraph{ !{0;/r1.6pc/:;/u1.6pc/::}
*{} :@{-}[d] *{\circ}
(
:@{-}[ddd] *{\circ} 
 (
 :@{-}[d]
 ,
 :@{-}[u(.5)r(.5)] *{\circ}
  (
  :@{}[u(.8)r(.8)] 
  :@{}[u(.5)r(.5)]="m" *{\circ}
  :@{-}@/_7pt/[r(.2)u(1.2)]="u" *{\circ}
  :@{-}[u]
  ,
  :@{}"m"
  )
 )
,
:@{-}[dddrr] *{\circ}
 (
 :@
{-}[d]
 ,
 :@{-}[u(1.5)] *{\circ}
 :@{-}"u"
 )
}
}
\POS(-75,-11)*{\eq}
\POS(-70,0)*+!L{
\xygraph{ !{0;/r1.6pc/:;/u1.6pc/::}
*{} :@{-}[d] *{\circ}
(
:@{-}[ddd] *{\circ} 
 (
 :@{-}[d]
 ,
 :@{}[u(.5)r(.5)] 
  (
  :@{}[u(.8)r(.8)] 
  :@{}[u(.5)r(.5)]="m" 
  :@{}[r(.2)u(1.2)]="u" *{\circ}
  :@{-}[u]
  ,
  :@{}"m"
  )
 )
,
:@{-}[dddrr] *{\circ}
 (
 :@
{-}[d]
 ,
 :@{-}[u(1.5)] *{\circ}
 :@{-}"u"
 )
}
}
\end{xy}
$$
\bigskip
\bigskip
\bigskip
\bigskip

\end{proof}

\section{Algebras in and module categories over categories of representations of Hopf algebras}\label{halg}

Recall (e.g. from \cite{ka}) that the category $H\da Mod$ of (left) modules over a Hopf algebra $H$ is monoidal, with respect to the tensor product of (underlying) vector spaces. 
Indeed, the coproduct of $H$ gives rise to an $H$-module structure on the tensor product $X\otimes Y$ of two $H$-modules:
$$h(x\otimes y) = \Delta(h)(x\otimes y) = \sum_{(h)}h_{(1)}x\otimes h_{(2)}y,\quad h\in H, x\in X, y\in Y.$$
This monoidal structure allows us to define certain algebraic structures in the category $H\da Mod$. 
\newline
An {\em algebra} in the category $H\da M od$ (or an {\em $H$-module algebra} for short) is an associative algebra $A$ with a left $H$-module structure, such that 
$$h(ab) = \sum_{(h)}h_{(1)}(a)\otimes h_{(2)}(b),\quad h\in H, a,b\in A.$$
\newline
A (left) {\em module} over an $H$-module algebra $A$ in $H\da Mod$ is a left $A$-module $M$ together with a left $H$-module structure, such that 
\begin{equation}\label{hmod}
h(am) = \sum_{(h)}h_{(1)}(a)\otimes h_{(2)}(m),\quad h\in H, a\in A, m\in M.
\end{equation}
The category $_{A}(H\da Mod)$ of left modules over an $H$-module algebra $A$ in $H\da Mod$ is a right module category over $H\da Mod$, i.e. there is a categorical pairing
$$_{A}(H\da Mod)\times H\da Mod\to {_{A}}{(H\da Mod)},\quad (M,X)\mapsto M\otimes X,$$
with the diagonal (left) $H$-module structure on $M\otimes X$ and the (left) $A$-action:
$$A\otimes M\otimes X\to M\otimes X,\quad a\otimes m\otimes x\mapsto am\otimes x.$$

For an $H$-algebra $A$ define the {\em smash product} $A\#H$ as follows. As a vector space the smash product $A\#H$ is the tensor product of $A$ and $H$ (with decomposable tensor denoted by $a\#h$). The multiplication in $A\#H$ is given by
$$(a\#g)(b\#h) = \sum_{(g)}ag_{(1)}(b)\#g_{(2)}h,\quad g,h\in H, a,b\in A.$$
The following statement is well-known to the specialists (see Paragraph 1.7 of \cite{so}). 
\begin{lem}
The category $_{A}(H\da Mod)$ coincides with the category $A\#H\da Mod$ of (left) modules over the  smash product $A\#H$. 
\end{lem}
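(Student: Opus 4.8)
The plan is to establish the claimed identification of categories by exhibiting an explicit, mutually inverse correspondence between left $A\#H$-module structures and the pairs (left $A$-action, left $H$-action) satisfying the compatibility condition (\ref{hmod}). Since both categories have the same underlying vector spaces and linear maps as morphisms, the entire content of the statement is that the object-level structures coincide; so it suffices to work at the level of a single vector space $M$ and check that the two kinds of structure determine each other.

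First I would describe the two maps. Given an $A\#H$-module $M$, I restrict the action along the two obvious linear inclusions $a\mapsto a\#1$ and $h\mapsto 1\#h$ of $A$ and $H$ into $A\#H$. One checks that $a\mapsto a\#1$ is an algebra map (immediate from $(a\#1)(b\#1)=ab\#1$, using that $1$ is grouplike so $\Delta(1)=1\otimes 1$), and likewise $h\mapsto 1\#h$ is an algebra map (from $(1\#g)(1\#h)=1\#gh$); hence $M$ becomes both an $A$-module and an $H$-module. Conversely, given compatible $A$- and $H$-actions on $M$, I define $(a\#h)\cdot m := a(h(m))$ and must verify this is an associative $A\#H$-action.

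The key computation is checking that $(a\#h)\mapsto (m\mapsto a(h(m)))$ respects the smash product multiplication, i.e. that $(a\#g)\big((b\#h)m\big)=\big((a\#g)(b\#h)\big)m$. Unwinding the left side gives $a\,\big(g(b(h(m)))\big)$, and here is where I invoke the compatibility axiom (\ref{hmod}): expanding $g(b\,h(m))=\sum_{(g)}g_{(1)}(b)\,g_{(2)}(h(m))$ turns the left side into $\sum_{(g)} a\,g_{(1)}(b)\,(g_{(2)}h)(m)$, which is exactly the action of $\sum_{(g)}ag_{(1)}(b)\#g_{(2)}h=(a\#g)(b\#h)$ on $m$. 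This single application of the module-algebra compatibility is the heart of the proof; everything else (unitality, the fact that the two constructions are inverse to each other, and that a linear map commutes with the $A\#H$-action iff it commutes with both the $A$- and the $H$-actions) is formal bookkeeping.

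I expect the main subtlety — rather than any deep obstacle — to be verifying that the two passages are genuinely mutually inverse. Starting from an $A\#H$-action, restricting to $A$ and $H$, and then recombining via $(a\#h)m=a(h(m))$ must return the original action; this follows because every element of $A\#H$ can be written as $\sum_i a_i\#h_i=\sum_i(a_i\#1)(1\#h_i)$, so the action on it is forced by the two restricted actions. In the other direction, restricting the recombined action back to $A$ (resp. $H$) recovers the original $A$-action (resp. $H$-action) upon setting $h=1$ (resp. $a=1$) and using unitality. Once these checks are in place, the identity of the two module categories — objects and morphisms alike — is immediate.
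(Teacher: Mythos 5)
Your proposal is correct and follows exactly the paper's own argument: the paper likewise defines the $A\#H$-action by $(a\#h)m = ah(m)$ and, conversely, restricts along the embeddings $a\mapsto a\#1$, $h\mapsto 1\#h$, leaving the compatibility check implicit. You have simply written out the verification (the single use of the module-algebra axiom and the mutual inverseness) that the paper omits.
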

\begin{proof}
The $A\#H$-module structure on a left module $M$ over an $H$-module algebra $A$ in $H\da Mod$ is given by
$(a\#h)m = ah(m)$. Conversely, the embeddings $A,H\to A\#H$:
$$a\mapsto a\#1,\quad h\mapsto 1\#h$$ allow us to define $A$- and $H$-module structures on an $A\#H$-module, compatible in the way (\ref{hmod}). 
\end{proof}
Note that for any $H$-module algebra $A$ the smash product $B=A\#H$ is a (right) {\em $H$-comodule algebra}, i.e. that it is equipped with a homomorphism of algebras $\psi:B\to B\otimes H$, which is a (right) $H$-comodule structure. Indeed, the map $\psi:A\#H\to A\#H\otimes H$ can be defined as follows:
$$a\# h\mapsto \sum_{(h)}a\#h_{(1)}\otimes h_{(2)}.$$
Note also that the category of modules $B\da Mod$ over an $H$-comodule algebra is a right module category over $H\da Mod$:
$$B\da Mod\times H\da Mod\to B\da Mod,\quad (M,X)\mapsto M\otimes X,$$
where the $B$-module structure on $M\otimes X$ has the form:
$$b(m\otimes x) = \sum_{(b)}b_{(0)}m\otimes b_{(1)}x.$$ Here $\psi(b) = \sum_{(b)}b_{(0)}\otimes b_{(1)}\in B\otimes H$.

\begin{exam}Regular module category.
\end{exam}
The category $H\da Mod$ can always be considered as a right module category over itself (it also can be realised as the category of modules in $H\da Mod$ over the trivial $H$-module algebra $k$). The category $\End_{H\da Mod}(H\da Mod)$ of $H\da Mod$-invariant endofunctors of $H\da Mod$ coincides with $H\da Mod$. Indeed, any $H\da Mod$-invariant endofunctor of $H\da Mod$ is determined by its value on the monoidal unit $k$.

Similarly the category $H\da Comod$ of left $H$-comodules is a right module category over itself (again it also can be realised as the category of modules in $H\da Comod$ over the trivial $H$-comodule algebra $k$). The category $\End_{H\da Comod}(H\da Comod)$ of $H\da Comod$-invariant endofunctors of $H\da Comod$ coincides with $H\da Comod$.

\begin{exam}\label{ffmc} $Vect$ as a module category.
\end{exam}
The forgetful functor $H\da Mod\to\Vect$ turns $\Vect$ into a module category over $H\da Mod$. This category coincides with the category of modules in $H\da Mod$ over the $H$-module algebra $H^*$, with the (regular) $H$-action
$$h(l)(g) = l(gh),\quad l\in H^*, g,h\in H.$$
The category $\End_{H\da Mod}(\Vect)$ of $H\da Mod$-invariant endofunctors of $\Vect$ coincides with $H\da Comod$. Indeed, any $H\da Mod$-invariant endofunctor $F$ of $\Vect$ is given by its value $F(k)$, which has a natural structure of $H$-comodule (for details see e.g. \cite{os}, theorem 5).

Similarly the forgetful functor $H\da Comod\to\Vect$ turns $\Vect$ into a module category over $H\da Comod$. This time the category coincides with the category of modules in $H\da Comod$ over the $H$-comodule algebra $H$, with the (regular) $H$-coaction given by the comultiplication. 
\newline
The category $\End_{H\da Comod}(\Vect)$ of $H\da Comod$-invariant endofunctors of $\Vect$ coincides with $H\da Mod$. Indeed, any $H\da Comod$-invariant endofunctor $F$ of $\Vect$ is given by its value $F(k)$, which has a natural structure of $H$-module.
\bigskip

The two examples above are extreme cases of the following situation. Let $H\to F$ be an epimorhism of Hopf subalgebra. The induction functor $ind^H_F\se H\da Comod\to F\da Comod$ is monoidal and in particular makes $F\da Comod$ a module category over $H\da Comod$. Here we describe this module category as the category of modules in $H\da Comod$ over an $H$-comodule algebra. The algebra has the following form 
$$A(H,F) = \{h\in H|\ \Delta(h) \in H\otimes F \},$$
with the $H$-coaction given by $\Delta$. 
\newline
The functor $\F\da Comod\to{_A}H\da Comod$ sends an $F$-comodule $N$ with $F$-comodule structure $\psi\se N\to F\otimes N$ into 
$$(H\otimes N)^F = \{x\in H\otimes L|\ (1\otimes t)( \Delta\otimes 1)(x) = (1\otimes\psi)(x),\quad \forall f\in F\}.$$ The $H$-comodule structure on $(H\otimes N)^F$ has the form $h*x = x(S(h)\otimes 1)$.

\section{Monoidal centre. Yetter-Drinfeld modules and Drinfeld double}\label{moncen}

Here, after recalling (e.g. from \cite{ka}) the descriptions of the monoidal centre $\cZ(H\da Mod)$ in terms of Yetter-Drinfeld modules and Drinfeld double, we look at commutative algebras in $\cZ(H\da Mod)$.

A {\em Yetter-Drinfeld} module over a Hopf algebra $H$ is a left $H$-module $M$ with a left $H$-comodule structure, satisfying the following compatibility condition:
\begin{equation}\label{com}
 \sum_{(h),(m)}h_{(1)}m_{(-1)}\otimes h_{(2)}m_{(0)} = \sum_{(h),(m)}(h_{(1)}m)_{(-1)}h_{(2)}\otimes (h_{(1)}m)_{(0)},
 \end{equation}
 which in diagrammatic notation has the form:
 \bigskip
 $$\begin{xy} (-20,0)*+!R{\xygraph{ !{0;/r1.5pc/:;/u1.5pc/::}
*{} :@{-}[d] *{\circ}
(
:@{-}[dd] *{\circ}
 (
 :@{-}[d]
 ,
 :@{-}[uurr]="r" *{\circ}
 :@{=}[u]
 )
,
:@{-}[ddrr] *{\circ}
 (
 :@{=}"r"
 ,
 :@{=}[d]
 )
}
}
\POS(10,-8)*{{\eq}}
\POS(20,0)*+!L{\xygraph{ !{0;/r1.5pc/:;/u1.5pc/::}
*{} :@{-}[d(.75)] *{\circ}
(
:@{-}@(rd,ru)[d(2.5)]="d" *{\circ}
:@{-}[d(.75)]
,
:@{-}@(dl,ul)[drr] *{\circ}
(
:@{=}[u(1.75)]
,
:@{=}[d(.5)] *{\circ}
(
:@{=}[d(1.75)]
,
:@{-}@(dl,ul)"d"
)
)
}
}
\end{xy}
$$
\bigskip\bigskip
\bigskip
\newline
Later on we will need the following equivalent form of the Yetter-Drinfeld condition (\ref{com}):
\begin{equation}\label{yde}
\sum_{(hm)}(hm)_{(-1)}\otimes(hm)_{(0)} = \sum_{(h),(m)}h_{(1)}m_{(-1)}S(h_{(3)})\otimes h_{(2)}m_{(0)}
\end{equation}
which has the following string presentation:

\medskip
$$\begin{xy} (-65,0)*+!R{
\xygraph{ !{0;/r1.3pc/:;/u1.3pc/::}
*{} :@{-}[d] *{\circ}
(
:@{-}@/_5pt/[dddr] *{\circ}
 (
 :@{-}@/^10pt/[uuurr]="u" *{\circ}
 ,
 :@{-}[d]="d" *{\circ}
 :@{-}[d] 
 )
,
:@{-}@/^7pt/[drr] *{\circ}
 (
 :@{-}@/^4pt/[dd] *{\circ}
 :@{-}@/^3pt/"d"
 ,
 :@{-}@/_8pt/[ddr] *{\circ}
  (
  :@{=}"u"
  :@{=}[u]
  ,
  :@{=}[dd]
  )
 )
)
}
}
\POS(-23,-11)*{\eq}
\POS(-5,0)*+!L{
\xygraph{ !{0;/r1.3pc/:;/u1.3pc/::}
*{} :@{-}@/_7pt/[d(2.5)r(1.4)] *{\circ}
(
:@{=}[d(1)] *{\circ}
 (
 :@{-}@/_7pt/[d(2.5)l(1.4)]
 ,
 :@{=}[d(2.5)]
 )
,
:@{=}[u(2.5)] 
)
}
}
\end{xy}
$$
\bigskip
\bigskip
\bigskip
\bigskip
\newline
and can be proved by the following sequence of moves:

$$\begin{xy} (-135,0)*+!R{
\xygraph{ !{0;/r1.3pc/:;/u1.3pc/::}
*{} :@{-}[d] *{\circ}
(
:@{-}@/_5pt/[dddr] *{\circ}
 (
 :@{-}@/^10pt/[uuurr]="u" *{\circ}
 ,
 :@{-}[d]="d" *{\circ}
 :@{-}[d] 
 )
,
:@{-}@/^7pt/[drr] *{\circ}
 (
 :@{-}@/^4pt/[dd] *{\circ}
 :@{-}@/^3pt/"d"
 ,
 :@{-}@/_8pt/[ddr] *{\circ}
  (
  :@{=}"u"
  :@{=}[u]
  ,
  :@{=}[dd]
  )
 )
)
}
}
\POS(-110,-11)*{\eq}
\POS(-100,0)*+!L{
\xygraph{ !{0;/r1.3pc/:;/u1.3pc/::}
*{} :@{-}[d] *{\circ}
(
:@{-}[d] *{\circ}
(
:@{-}[dd] *{\circ}
 (
 :@{-}[d]="d" *{\circ}
 :@{-}[d]
 ,
 :@{-}[uurr]="r" *{\circ}
 :@{=}[uu]
 )
,
:@{-}[ddrr] *{\circ}
 (
 :@{=}"r"
 ,
 :@{=}[dd]
 )
 )
,
:@{-}@/^12pt/[ddrrr] *{\circ}
:@{-}@/^12pt/"d"
)
}
}
\POS(-75,-11)*{\eq}
\POS(-65,0)*+!L{
\xygraph{ !{0;/r1.3pc/:;/u1.3pc/::}
*{} :@{-}[d] *{\circ}
(
:@{-}[d(.75)] *{\circ}
(
:@{-}@(rd,ru)[d(2.5)]="d" *{\circ}
:@{-}[d]="dd" *{\circ}
:@{-}[d(.75)]
,
:@{-}@(dl,ul)[drr] *{\circ}
(
:@{=}[u(2.75)]
,
:@{=}[d(.5)] *{\circ}
(
:@{=}[d(2.75)]
,
:@{-}@(dl,ul)"d"
)
)
)
,
:@{-}@/^12pt/[ddrrr] *{\circ}
:@{-}@/^12pt/"dd"
)
)
}
}
\POS(-40,-11)*{\eq}
\end{xy}
$$
\bigskip
\bigskip
\bigskip
\bigskip

$$\begin{xy} 
\POS(-145,-11)*{\eq}
\POS(-135,0)*+!R{
\xygraph{ !{0;/r1.3pc/:;/u1.3pc/::}
*{} :@{-}[d] *{\circ}
(
:@{-}@/_7pt/[d(1.5)r(1.4)] *{\circ}
(
:@{=}[d(1)] *{\circ}
 (
 :@{-}@/_7pt/[d(1.5)l(1.4)]="d" *{\circ}
 :@{-}[d]
 ,
 :@{=}[d(2.5)]
 )
,
:@{=}[u(2.5)] 
)
,
:@{-}@/^7pt/[drrr] *{\circ}
(
:@{-}@/_7pt/[dd]="r" *{\circ}
,
:@{-}@/^2pt/[d(1)r(.5)] *{\circ}
:@{-}@/^2pt/"r"
:@{-}@/^7pt/"d"
)
)
}
}
\POS(-105,-11)*{\eq}
\POS(-95,0)*+!L{
\xygraph{ !{0;/r1.3pc/:;/u1.3pc/::}
*{} :@{-}[d] *{\circ}
(
:@{-}@/_7pt/[d(1.5)r(1.4)] *{\circ}
(
:@{=}[d(1)] *{\circ}
 (
 :@{-}@/_7pt/[d(1.5)l(1.4)]="d" *{\circ}
 :@{-}[d]
 ,
 :@{=}[d(2.5)]
 )
,
:@{=}[u(2.5)] 
)
,
:@{-}[drr] *{\circ}
:@{}[dd] *{\circ}
:@{-}"d"
)
}
}
\POS(-70,-11)*{\eq}
\POS(-65,0)*+!L{
\xygraph{ !{0;/r1.3pc/:;/u1.3pc/::}
 !{0;/r1.3pc/:;/u1.3pc/::}
*{} :@{-}@/_7pt/[d(2.5)r(1.4)] *{\circ}
(
:@{=}[d(1)] *{\circ}
 (
 :@{-}@/_7pt/[d(2.5)l(1.4)]
 ,
 :@{=}[d(2.5)]
 )
,
:@{=}[u(2.5)] 
)
}
}
\end{xy}
$$
\bigskip
\bigskip
\bigskip
\bigskip
 
The condition (\ref{com}) is preserved by the tensor product (of modules and comodules):

\medskip
$$\begin{xy} (-135,0)*+!R{
\xygraph{ !{0;/r1.5pc/:;/u1.5pc/::}
*{} :@{-}[d] *{\circ}
(
:@{-}[d(4)r] *{\circ}
 (
 :@{-}@/_4pt/[d(1)r(.5)] *{\circ}
  (
  :@{=}[d]
  ,
  :@{=}[u(5)]="u" *{\circ}
  :@{=}[u]
  )
 ,
 :@{-}@/^4pt/[d(1)r(1.5)] *{\circ}
  (
  :@{=}[d]
  ,
  :@{=}[u(5)]="r" *{\circ}
  :@{=}[u]
  )
 )
,
:@{-}[d(5)] *{\circ}
 (
 :@{-}[d]
 ,
 :@{-}[u(4)r] *{\circ}
  (
  :@{-}@/^4pt/"u"
  ,
  :@{-}@/_4pt/"r"
  )
 )
)
}
}
\POS(-110,-16)*{\eq}
\POS(-100,0)*+!L{
\xygraph{ !{0;/r1.5pc/:;/u1.5pc/::}
*{} :@{-}[d] *{\circ}
(
: @{-}[d] *{\circ}
 (
 :@{-}[d(3)] *{\circ}
  (
  :@{-}[u(2)r(1.5)]="m" *{\circ}
  :@{=}[u(3)]
  ,
  :@{-}[d]="l" *{\circ}
  :@{-}[d]
  )
 ,
 :@{-}[d(2)r(1.5)] *{\circ}
  (
  :@{=}"m"
  ,
  :@{=}[d(3)]
  )
 )
,
:@{-}@/^12pt/[d(4)r(3)] *{\circ}
 (
 :@{=}[d(2)]
 ,
 :@{=}[u(3)] *{\circ}
  (
  :@{=}[u(2)]
  ,
  :@{-}@/^12pt/"l"
  )
 )
)
}
}
\POS(-70,-16)*{\eq}
\POS(-60,0)*+!L{
\xygraph{ !{0;/r1.5pc/:;/u1.5pc/::}
*{} :@{-}[d] *{\circ}
(
: @{-}[d] *{\circ}
 (
 :@{-}@/^20pt/[d(3)] *{\circ}
  (
  :@{-}@/^10pt/[u(1)r(1.5)]="m" *{\circ}
  :@{=}[d(3)]
  ,
  :@{-}[d]="l" *{\circ}
  :@{-}[d]
  )
 ,
 :@{-}@/_10pt/[d(1)r(1.5)] *{\circ}
  (
  :@{=}"m"
  ,
  :@{=}[u(3)]
  )
 )
,
:@{-}@/^12pt/[d(4)r(3)] *{\circ}
 (
 :@{=}[d(2)]
 ,
 :@{=}[u(3)] *{\circ}
  (
  :@{=}[u(2)]
  ,
  :@{-}@/^12pt/"l"
  )
 )
)
}
}
\POS(-33,-16)*{\eq}
\end{xy}
$$
\bigskip
\bigskip
\bigskip
\bigskip
\bigskip
\bigskip

$$\begin{xy} 
\POS(-167,-16)*{\eq}
\POS(-162,0)*+!R{
\xygraph{ !{0;/r1.5pc/:;/u1.5pc/::}
*{} :@{-}[d] *{\circ}
(
:@{-}@/_10pt/[r(1.5)d] *{\circ}
 (
 :@{=}[uu]
 ,
 :@{=}[ddd]="d" *{\circ} 
 :@{=}[dd]
 )
,
:@{-}@/^7pt/[r(.9)d(1.5)] *{\circ}
  (
  :@{-}[dd]="m" *{\circ}
  :@{-}@/^7pt/[l(.9)d(1.5)]="l" *{\circ}
   (
   :@{-}[d]
   ,
   :@{-}@/^10pt/"d"
   )
  ,
  :@{-}[ddrr] *{\circ}
   (
   :@{=}[d(2.5)]
   ,
   :@{=}[uu] *{\circ}
    (
    :@{=}[u(2.5)]
    ,
    :@{-}"m"
    )
   )
  )
 )
}
}
\POS(-134,-16)*{\eq}
\POS(-128,0)*+!L{
\xygraph{ !{0;/r1.5pc/:;/u1.5pc/::}
*{} :@{-}[d] *{\circ}
(
:@{-}@/_7pt/[dr] *{\circ}
 (
 :@{=}[uu]
 ,
 :@{=}[d(3)]="l" *{\circ}
 :@{=}[dd]
 )
,
:@{-}@/^7pt/[drr] *{\circ}
 (
 :@{-}@/_10pt/[d(1)r(1.5)] *{\circ}
  (
  :@{=}[u(3)]
  ,
  :@{=}[d] *{\circ}
   (
   :@{=}[d(3)]
   ,
   :@{-}@/_10pt/[l(1.5)d]
   )
  )
 ,
 :@{-}@/^20pt/[d(3)]="m" *{\circ}
 :@{-}@/^7pt/[dll] *{\circ}
  (
  :@{-}@/^7pt/"l"
  ,
  :@{-}[d]
  )
 )
)
}
}
\POS(-95,-16)*{\eq}
\POS(-90,0)*+!L{
\xygraph{ !{0;/r1.5pc/:;/u1.5pc/::}
*{} :@{-}[d] *{\circ}
(
:@{-}@/_10pt/[d(1)r(1)] *{\circ}
 (
 :@{-}@/_7pt/[dr] *{\circ}
  (
  :@{=}[u(3)]
  ,
  :@{=}[d]="l" *{\circ}
  :@{=}[d(3)]
  )
 ,
 :@{-}@/^5pt/[d(1)r(2.5)] *{\circ}
  (
  :@{=}[u(3)]
  ,
  :@{=}[d]="r" *{\circ}
  :@{=}[d(3)]
  )
 )
,
:@{-}@/^15pt/[d(5)] *{\circ}
 (
 :@{-}@/^10pt/[u(1)r(1)] *{\circ}
  (
  :@{-}@/^7pt/"l"
  ,
  :@{-}@/_5pt/"r"
  )
 ,
 :@{-}[d]
 )
)
}
}
\end{xy}
$$
\bigskip
\bigskip
\bigskip
\bigskip\bigskip\bigskip
\newline
thus making $\YD(H)$ a monoidal category. Moreover this category is braided, with the braiding
$$c_{M.N}(m\otimes n) = \sum_{(m)}m_{(-1)}n\otimes m_{(0)},$$
which graphically is represented by the picture: 
$\quad\quad
\xygraph{ !{0;/r1.0pc/:;/u1.0pc/::}
*{} :@{=}[dr] *{\circ}
(
:@{-}[dd] *{\circ}
 (
 :@{=}[ld]
 ,
 :@{=}[uurr]
 :@{=}[ur]
 )
,
:@{=}[dddrrr] 
}
$
\medskip
\newline
It is invertible with the inverse given by

The hexagon (triangle) axioms take the shape:

$$\begin{xy} (-55,0)*+!R{
\xygraph{ !{0;/r1.0pc/:;/u1.0pc/::}
*{} :@{=}[dr] *{\circ}
(
:@{=}[dddrrr] 
,
:@{-}[d] *{\circ}
(
:@{-}[d] *{\circ}
(
 :@{=}[ld]
 ,
 :@{=}[uuurrr]
 )
,
:@{-}@/^3pt/[dr] *{\circ}
(
:@{=}[dl]
,
:@{=}[uuurrr]
)
)
}
}
\POS(-23,-5)*{{\eq}}
\POS(-7,0)*+!L{
\xygraph{ !{0;/r1.0pc/:;/u1.0pc/::}
*{} :@{=}[dr] *{\circ}
(
:@{=}[dr] *{\circ}
(
:@{-}[d] *{\circ}
(
:@{=}[dl]
,
:@{=}[uuurrr]
)
,
:@{=}[ddrr]
)
,
:@{-}[d] *{\circ}
(
:@{=}[ddll] 
,
:@{=}[uurr]
)
)
}
}
\end{xy}
$$
\bigskip

$$\begin{xy} (-55,0)*+!R{
\xygraph{ !{0;/r1.0pc/:;/u1.0pc/::}
*{} :@{=}[dr] *{\circ}
(
:@{=}[dddrrr] 
,
:@{-}[d] *{\circ}
(
:@{-}[d] *{\circ}
 (
 :@{=}[uuurrr]
 ,
 :@{=}[dl]
 )
 ,
:@{-}@/_3pt/[ur] *{\circ}
 (
 :@{=}[ul]
 ,
 :@{=}[dddrrr]
 )
)
)
}
}
\POS(-23,-5)*{{\eq}}
\POS(-12,0)*+!L{
\xygraph{ !{0;/r1.0pc/:;/u1.0pc/::}
*{} :@{=}[ddrr] *{\circ}
(
:@{-}[d] *{\circ}
(
:@{=}[dl]
,
:@{=}[ur] *{\circ}
(
:@{=}[uurr]
,
:@{-}[u] *{\circ}
(
:@{=}[ul]
,
:@{=}[dddrrr]
)
)
)
,
:@{=}[ddrr]
)
}
}
\end{xy}
$$
\bigskip

The importance of Yetter-Drinfeld modules come from the fact that $\YD(H)$ is equivalent to the monoidal centres $\cZ(H\da Mod)$, $\cZ(H\da Comod)$. 
\newline
The functor $\cZ(H\da Mod)\to \YD(H)$ sends $(Z,z)\in\cZ(H\da Mod)$ into the $H$-module with the $H$-coaction $\psi(z) = z_H(z\otimes 1)$. Here $z_X:Z\otimes X\to X\otimes Z$ is the half braiding of $(Z,z)\in\cZ(H\da Mod)$ and $z_H$ is its specialisation on $H\in H\da Mod$. 
\newline
Conversely, the functor $\YD(H)\to\cZ(H\da Mod)$ defines a half braiding $M\otimes X\to X\otimes M$ 
$$m\otimes x\mapsto \sum_{(m)}m_{(-1)}x\otimes m_{(0)}$$ on a Yetter-Drinfeld module $M$. $H$-linearity of the half braiding is equivalent to the condition (\ref{com}), see \cite{ka} fo details.
\newline
Similarly, the functor $\cZ(H\da Comod)\to \YD(H)$ sends $(Z,z)\in\cZ(H\da Mod)$ into the $H$-comodule with the $H$-action $hz = (\varepsilon\otimes 1)z_H(z\otimes h)$. Here $z_X:Z\otimes X\to X\otimes Z$ is the half braiding of $(Z,z)\in\cZ(H\da Comod)$ and $z_H$ is its specialisation on $H\in H\da Comod$ and $\varepsilon:H\to k$ is the counit. 
\newline
Conversely, the functor $\YD(H)\to\cZ(H\da Comod)$ defines a half braiding $M\otimes L\to L\otimes M$ 
$$m\otimes l\mapsto \sum_{(l)}m_{(0)}\otimes S(m_{(-1)})m$$ on a Yetter-Drinfeld module $M$. 

For a finite dimensional $H$ the category $\YD(H)$ can be realised as the category $D(H)\da Mod$ of modules over a Hopf algebra $D(H)$, the {\em Drinfeld double} of $H$. As a vector space $D(H)$ is the tensor product $H\otimes H^*$ (with elements denoted by $hl,\ l\in H^*, h\in H$). Moreover $H$ and $H^*$ are Hopf subalgebras of $D(H)$. The multiplication and comultiplication are given by
$$lh = \sum_{(h)}h_{(2)}l(h_{(1)}\da S(h_{(3)})),\quad \Delta(hl) = \sum_{(l),(h)}h_{(1)}l_{(1)}\otimes h_{(2)}l_{(2)}.$$
Indeed the $H$-module part of a Yetter-Drinfeld module $M$ correspond to the action of $H\subset D(H)$, while the $H$-coaction comes from the action of $H^*\subset D(H)$. The consequence (\ref{yde}) 
of the compatibility condition (\ref{com}) implies the above formula for the product in $D(H)$ (again see \cite{ka} for details). 

To make a distinction with the commutativity in $\Vect$ we call a commutative algebra $A$ in $\YD(H)$ quantum commutative. Explicitly, an algebra $A\in\YD(H)$ is  {\em quantum commutative} if 
$$ab =  \sum_{(a)}a_{(-1)}(b)a_{(0)},\quad a,b\in A,$$ where $a\mapsto \sum_{(a)}a_{(-1)}\otimes a_{(0)}\in H\otimes A$ is the comodule structure on $A$. 
\newline
Following \cite{da,os} define for a not necessarily commutative algebra $A\in\YD$ its {\em left centre} by
$$C_l(A) = \{a\in A|\ ab=\sum_{(a)}a_{(-1)}(b)a_{(0)},\quad \forall b\in A\}.$$
The fact that this is a submodule and subcomodule (and hence a subobject in $\YD(H)$) of $A$ follows from $H$-(co)liearity of the braiding in $\YD(H)$. The fact that this is a quantum commutative subalgebra of $A$ follows from its definition (see also \cite{da} for a general categorical explanation). 

\section{Full centre}\label{}

The forgetful functor $F\se \cZ(H\da Mod)\to H\da Mod$ corresponds to the forgetful functor $\YD(H)\to H\da Mod$. Here we describe its right adjoint, which is an important technical statement needed for the proof of the main result of the paper. Note that this right adjoint was described in \cite{cm} (corollary 2.8)  (see also Proposition 1 of \cite{ra} and Paragraph 7 of \cite{sz}). Here we add a proof for the sake of completeness. 
\begin{prop}\label{rad}
The forgetful functor $\YD(H)\to H\da Mod$ has a right adjoint $R(\da)=H\otimes\da\se H\da Mod\to \YD(H)$, where $H$-action and $H$-coaction on $R(N)=H\otimes N$ have the form:
$$h(g\otimes n) = \sum_{(h)}h_{(1)}gS(h_{(3)})\otimes h_{(2)}n,\quad \psi(g\otimes n) = \sum_{(g)}g_{(1)}\otimes g_{(2)}\otimes n\in H\otimes H\otimes N.$$
The adjunction natural transformations 
$$\alpha_M\se M\to RF(M),\quad\beta_L\se FR(L)\to L$$ are given by the comodule structure on an object of $M\in\YD(H)$ and by the counit respectively:
$$\alpha_M(m) = \sum_{(m)}m_{(-1)}\otimes m_{(0)},\quad \beta_N(h\otimes n) = \varepsilon(h)n.$$
In particular, the adjunction morphism $\beta$ is epi. 
\end{prop}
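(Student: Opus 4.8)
The plan is to verify four things in turn: that $R(N)=H\otimes N$ with the stated action and coaction is a genuine object of $\YD(H)$; that $\alpha$ and $\beta$ are morphisms in $\YD(H)$ and in $H\da Mod$ respectively, natural in their arguments; that the two triangle identities hold; and that $\beta$ is epi. Functoriality of $R$ and naturality of $\alpha,\beta$ are routine, since every structure map is built from $\Delta$, $S$, $\varepsilon$ and the given module/comodule data, so I would only remark on them and concentrate on the substantive points.

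The heart of the argument is showing $R(N)\in\YD(H)$. The coaction $\psi=\Delta\otimes\mathrm{id}_N$ makes $H\otimes N$ an $H$-comodule directly from coassociativity and counitality of $\Delta$. The action $h(g\otimes n)=\sum_{(h)}h_{(1)}gS(h_{(3)})\otimes h_{(2)}n$ is the diagonal of the adjoint action on $H$ and the given action on $N$; associativity and unitality follow from $\Delta$ being an algebra map and $S$ an algebra anti-homomorphism. The only real check is the compatibility (\ref{com}). Writing $m=g\otimes n$, so that $m_{(-1)}=g_{(1)}$ and $m_{(0)}=g_{(2)}\otimes n$, I would expand both sides: on the right the term $(h_{(1)}m)_{(-1)}h_{(2)}$ produces exactly the pattern $(gS(h_{(1)}))_{(1)}h_{(2)}$ controlled by Lemma \ref{aux}, and after applying it both sides collapse to a common expression. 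Equivalently, I would check the reformulation (\ref{yde}), for which a direct computation using only coassociativity and $\Delta S=(S\otimes S)\tau\Delta$ shows that both sides equal $\sum h_{(1)}g_{(1)}S(h_{(5)})\otimes h_{(2)}g_{(2)}S(h_{(4)})\otimes h_{(3)}n$ after relabelling by $\Delta^{(4)}(h)$. I expect this compatibility verification to be the main obstacle; the identity of Lemma \ref{aux} is exactly tailored to the term that arises, and the computation is cleanest carried out in string diagrams.

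With $R(N)\in\YD(H)$ established, I would check $\alpha_M(m)=\sum m_{(-1)}\otimes m_{(0)}$ is a morphism in $\YD(H)$: its $H$-colinearity is coassociativity of the coaction on $M$, while its $H$-linearity, $\alpha_M(hm)=h\,\alpha_M(m)$, unwinds to precisely the equality $\sum(hm)_{(-1)}\otimes(hm)_{(0)}=\sum h_{(1)}m_{(-1)}S(h_{(3)})\otimes h_{(2)}m_{(0)}$, which is the Yetter-Drinfeld condition (\ref{yde}) for $M$ and so holds by hypothesis. That $\beta_N(h\otimes n)=\varepsilon(h)n$ is $H$-linear for the forgotten module structure on $FR(L)$ follows from the counit axioms, since $\varepsilon(h_{(1)}gS(h_{(3)}))=\varepsilon(g)\varepsilon(h)$.

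Finally I would verify the two triangle identities. The composite $\beta_{F(M)}\circ F(\alpha_M)$ sends $m$ to $\sum\varepsilon(m_{(-1)})m_{(0)}=m$ by the comodule counit axiom, and $R(\beta_L)\circ\alpha_{R(L)}$ sends $g\otimes n$ to $\sum g_{(1)}\varepsilon(g_{(2)})\otimes n=g\otimes n$ by the same axiom applied to the coaction $\Delta\otimes\mathrm{id}_N$; both composites are identities, so $R$ is right adjoint to $F$ with unit $\alpha$ and counit $\beta$. The last claim is then immediate: $n\mapsto 1\otimes n$ is a section of $\beta_N$, since $\beta_N(1\otimes n)=\varepsilon(1)n=n$, so $\beta$ is (split) epi.
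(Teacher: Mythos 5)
Your proposal is correct and follows essentially the same route as the paper: the Yetter--Drinfeld compatibility for $R(N)$ is reduced to Lemma \ref{aux} (the paper does this in string diagrams, you in Sweedler notation), $H$-linearity of $\alpha$ is recognised as the condition (\ref{yde}), its colinearity as coassociativity, and $\beta$'s linearity and the triangle identities follow from the counit axioms. Your explicit section $n\mapsto 1\otimes n$ for $\beta_N$ is a nice touch the paper leaves implicit.
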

\begin{proof}
The string diagrams of $H$-action and $H$-coaction on $R(N)=H\otimes N$ have the form:
\medskip
$$\begin{xy} (-65,0)*+!R{
\xygraph{ !{0;/r1.3pc/:;/u1.3pc/::}
*{} :@{-}[d] *{\circ}
(
:@{-}@/_5pt/[dddr] *{\circ}
 (
 :@{-}[uuuu]
 ,
 :@{-}[d]="d" *{\circ}
 :@{-}[d]
 )
,
:@{-}@/^7pt/[drr] *{\circ}
 (
 :@{-}@/^4pt/[dd] *{\circ}
 :@{-}@/^3pt/"d"
 ,
 :@{-}@/_8pt/[ddr] *{\circ}
  (
  :@{=}[uuuu]
  ,
  :@{=}[dd]
  )
 )
)
}
}
\POS(-23,-5)*{}
\POS(-5,0)*+!L{
\xygraph{ !{0;/r1.3pc/:;/u1.3pc/::}
*{} :@{-}[d(3)] *{\circ}
(
:@{-}@/_5pt/[d(3)l] 
,
:@{-}[d(3)] 
:@{}[r(1.5)]
:@{=}[u(6)]
)
}
}
\end{xy}
$$
\bigskip\bigskip\bigskip\bigskip\bigskip\bigskip
\newline
The Yetter-Drinfeld condition (\ref{com}) for these action and coaction can be proved as follows:

$$\begin{xy} (-135,0)*+!R{
\xygraph{ !{0;/r1.5pc/:;/u1.5pc/::}
*{} :@{-}[d] *{\circ}
(
:@{-}[d(3)] *{\circ}
 (
 :@{-}[d(3)]
 ,
 :@{-}[u(3)r(2)] *{\circ}
  (
  :@{-}[u]
  ,
  :@{-}[d(4.5)]="m" *{\circ}
  :@{-}[d(.5)]="d" *{\circ}
  :@{-}[d]
  )
 )
,
:@{-}[d(2)r(1.5)] *{\circ}
 (
 :@{-}@/_5pt/"m"
 ,
 :@{-}@/^3pt/[d(.75)r(1)] *{\circ}
  (
  :@{-}@/^5pt/[d(1.5)] *{\circ}
  :@{-}@/^2pt/"d"
  ,
  :@{-}@/_5pt/[d(1)r(.5)] *{\circ}
   (
   :@{=}[d(2.25)]
   ,
   :@{=}[u(4.75)]
   )
  )
 )
)
}
}
\POS(-110,-16)*{\eq}
\POS(-100,0)*+!L{
\xygraph{ !{0;/r1.5pc/:;/u1.5pc/::}
*{} :@{-}[d] *{\circ}
(
:@{-}[d] *{\circ}
 (
 :@{-}[d(3)] *{\circ}
  (
  :@{-}[d(2)]
  ,
  :@{-}[u(3)r(2)]="m" *{\circ}
  :@{-}[u(2)]
  )
 ,
 :@{-}[d(3)r(2)] *{\circ}
  (
  :@{-}"m"
  ,
  :@{-}[d]="d" *{\circ}
  :@{-}[d]
  )
 )
,
:@{-}@/^15pt/[d(2)r(2.5)] *{\circ}
 (
 :@{-}@/^2pt/[d(2)] *{\circ}
 :@{-}@/^2pt/"d"
 ,
 :@{-}@/_7pt/[d(1)r(.5)] *{\circ}
  (
  :@{=}[u(4)]
  ,
  :@{=}[d(3)]
  )
 )
)
}
}
\POS(-70,-16)*{\eq}
\POS(-60,0)*+!L{
\xygraph{ !{0;/r1.5pc/:;/u1.5pc/::}
*{} :@{-}[d] *{\circ}
(
:@{-}@/_2pt/[dr] *{\circ}
 (
 :@{-}[u(2)]
 ,
 :@{-}[d(3)] *{\circ}
  (
  :@{-}@/_7pt/[d(2)l]
  ,
  :@{-}[d]="d" *{\circ}
  :@{-}[d]
  )
 )
,
:@{-}@/^10pt/[d(2)r(1.5)] *{\circ}
 (
 :@{-}@/^2pt/[dd] *{\circ}
 :@{-}@/^2pt/"d"
 ,
 :@{-}@/_7pt/[d(1)r(.5)] *{\circ}
  (
  :@{=}[u(4)]
  ,
  :@{=}[d(3)]
  )
 )
}
}
\POS(-33,-16)*{\eq}
\end{xy}
$$
\bigskip
\bigskip
\bigskip
\bigskip
\bigskip
\bigskip
\newline
then by lemma \ref{aux} 

$$\begin{xy} 
\POS(-115,-16)*{\eq}
\POS(-110,0)*+!L{
\xygraph{ !{0;/r1.5pc/:;/u1.5pc/::}
*{} :@{-}[d] *{\circ}
(
:@{-}@/_5pt/[dr] *{\circ}
 (
 :@{-}[uu]
 ,
 :@{-}[dd]="m" *{\circ}
 :@{-}[d] *{\circ}
  (
  :@{-}@/_5pt/[dl]="d" *{\circ}
  :@{-}[d]
  ,
  :@{-}[dd]
  )
 )
,
:@{-}@/^7pt/[r(2)d(.5)] *{\circ}
 (
 :@{-}@/^2pt/[d] *{\circ}
  (
  :@{-}@/^2pt/[d(1)l(.5)] *{\circ}
  :@{-}@/^2pt/"m"
  ,
  :@{-}@/^15pt/"d"
  )
 ,
 :@{-}@/_10pt/[r(.5)d(2.5)] *{\circ}
  (
  :@{=}[u(4)]
  ,
  :@{=}[d(3)]
  )
 )
)
)
}
}
\POS(-75,-16)*{\eq}
\POS(-70,0)*+!L{
\xygraph{ !{0;/r1.5pc/:;/u1.5pc/::}
*{} :@{-}[d] *{\circ}
(
:@{-}@/^15pt/[d(5)] *{\circ}
 (
 :@{-}[d]
 ,
 :@{-}@/^10pt/[u(1)r(1.5)]="m" *{\circ}
 :@{-}[dd]
 )
,
:@{-}@/_7pt/[rd] *{\circ}
 (
 :@{-}@/_5pt/[d(2.5)r(.5)] *{\circ}
  (
  :@{-}[u(.5)]="mu" *{\circ} 
  :@{-}[u(4)]
  ,
  :@{-}"m"
  )
 ,
 :@{-}@/^2pt/[r(1)d(.5)] *{\circ}
  (
  :@{-}@/^5pt/[d] *{\circ}
  :@{-}@/^2pt/"mu"
  ,
  :@{-}@/_7pt/[rd] *{\circ}
   (
   :@{=}[u(3.5)]
   ,
   :@{=}[d(3.5)]
   )
  )
 )
)
}
}
\end{xy}
$$
\bigskip
\bigskip
\bigskip
\bigskip
\bigskip
\bigskip

The $H$-linearity of $\beta$ is transparent:
\medskip
$$\begin{xy} (-65,0)*+!R{
\xygraph{ !{0;/r1.3pc/:;/u1.3pc/::}
*{} :@{-}[d] *{\circ}
(
:@{-}@/_5pt/[dddr] *{\circ}
 (
 :@{-}[uuuu]
 ,
 :@{-}[d]="d" *{\circ}
 :@{-}[d(.75)] *{\circ}
 )
,
:@{-}@/^7pt/[drr] *{\circ}
 (
 :@{-}@/^4pt/[dd] *{\circ}
 :@{-}@/^3pt/"d"
 ,
 :@{-}@/_8pt/[ddr] *{\circ}
  (
  :@{=}[uuuu]
  ,
  :@{=}[dd]
  )
 )
)
}
}
\POS(-23,-11)*{\eq}
\POS(-5,0)*+!L{
\xygraph{ !{0;/r1.3pc/:;/u1.3pc/::}
*{} :@{-}@/_7pt/[d(3)r(2,4)] *{\circ}
(
:@{=}[d(3)] 
,
:@{=}[u(3)] 
:@{}[l]
:@{-}[d] *{\circ}
)
}
}
\end{xy}
$$
\bigskip\bigskip\bigskip\bigskip
\newline
The $H$-colinearity of $\alpha_M$ is equivalent to the coassociativity of the $H$-comodule structure of $M$. The $H$-linearity of $\alpha$ follows from the identity (\ref{yde}). 

The adjunction axioms for $\alpha$ and $\beta$ follow from the counit axiom. 
\end{proof}
\begin{rem}
\end{rem}
For a finite dimensional $H$ the forgetful functor $\cZ(H\da Mod)\to H\da Mod$ corresponds to the restriction functor $D(H)\da Mod\to H\da Mod$ along the embedding $H\to D(H)$. In particular it has a right adjoint $$R(\da)=Hom_H(D(H),\da)\se H\da Mod\to D(H)\da Mod,$$
which due to the (multiplicative) decomposition $D(H)=H.H^*$, coincides with the adjoint functor from proposition \ref{rad}:
$$Hom_H(D(H),\da) = Hom_H(H.H^*,\da) = Hom(H^*,\da) = H\otimes\da.$$
\medskip

As a right adjoint to a monoidal functor the functor $R\se H\da Mod\to \YD(H)$ has a lax (op)monoidal structure $R(M)\otimes R(N)\to R(M\otimes N)$ (e.g. see \cite{da}), which in our case has the form
$$H\otimes M\otimes H\otimes N\to H\otimes M\otimes N,\quad g\otimes m\otimes h\otimes n\mapsto gh\otimes m\otimes n.$$
In particular the functor $R$ sends an $H$-module algebra $A$ into an algebra $R(A)=H\otimes A$ in the monoidal category $\YD(H)$, where the multiplcation in $H\otimes A$ is just the tensor product multiplication:
$$(g\otimes a)(h\otimes b) = gh\otimes ab,\quad g,h\in H, a,b\in A.$$
In \cite{da} we derived a formula (theorem 5.4) expressing the full centre in terms of the right adjoint $R$ to the forgetful functor $F:\cZ(\cC)\to \cC$. The condition for this formula to work was epimorphity of the adjunction morphism 
$$\beta_X:FR(X)\to X,\quad U\in\cZ(\cC), X\in\cC.$$
In our case the adjunction map is epi (by proposition \ref{rad}), which allows us to get the main result of this note.
\begin{theo}\label{main}
The full centre $Z(A)$ of an $H$-module algebra $A$ coincides with the centraliser $C_{A\#H}(A)$. The Yetter-Drinfeld structure ($H$-action and $H$-coaction) on $C_{A\#H}(A)$ has the form:
$$h(\sum_ia_i\# g_i) = \sum_{(h),i}h_{(2)}(a_i)\#S^2(h_{(3)})g_iS(h_{(1)}),$$ $$\psi(\sum_ia_i\# g_i) = \sum_{i,(g_i)}S^{-1}((g_i)_{(2)})\otimes a_i\#(g_i)_{(1)},$$ where $\sum_ia_i\# g_i\in C_{A\#H}(A)$.
\end{theo}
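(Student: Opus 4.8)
The plan is to reduce everything to the general formula of theorem 5.4 of \cite{da}. Since the adjunction counit $\beta$ is epi (proposition \ref{rad}) and $\cZ(H\da Mod)\simeq\YD(H)$, that theorem identifies the full centre $Z(A)$ with the left centre $C_l(R(A))$ of the algebra $R(A)=H\otimes A$ in $\YD(H)$. Here $R(A)$ carries the tensor-product multiplication $(g\otimes a)(k\otimes b)=gk\otimes ab$ coming from the lax monoidal structure of $R$, together with the Yetter--Drinfeld structure of proposition \ref{rad}. So the task splits into computing this left centre explicitly and transporting it onto $A\#H$.

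First I would write out the defining equation of the left centre. For $z=\sum_i g_i\otimes a_i$ and a test element $w=k\otimes b$, the coaction gives $z_{(-1)}=(g_i)_{(1)}$ and $z_{(0)}=(g_i)_{(2)}\otimes a_i$, and substituting the $H$-action of proposition \ref{rad} into $zw=\sum z_{(-1)}(w)z_{(0)}$ produces a four-fold comultiplication of $g_i$. The antipode leg then cancels against one comultiplication leg through $\sum S(x_{(1)})x_{(2)}=\varepsilon(x)$, and since the free variable $k$ enters only by right multiplication on the $H$-leg it may be set to $1$. The condition collapses to $\sum_i g_i\otimes a_i b=\sum_i (g_i)_{(1)}\otimes (g_i)_{(2)}(b)\,a_i$ for all $b\in A$. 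This is the step I would carry out in string diagrams, in the spirit of lemma \ref{aux} and proposition \ref{rad}.

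Next I introduce the linear isomorphism $\iota\se H\otimes A\to A\#H$, $g\otimes a\mapsto a\#S(g)$, which is bijective because $S$ is invertible. Using that $S$ is an algebra anti-homomorphism and that $\Delta S=(S\otimes S)\tau\Delta$, a direct check shows that $\iota$ carries the restricted Yetter--Drinfeld structure of $R(A)$ to exactly the $H$-action $h(a\#G)=\sum_{(h)} h_{(2)}(a)\#S^2(h_{(3)})GS(h_{(1)})$ and the $H$-coaction $\psi(a\#G)=\sum_{(G)} S^{-1}(G_{(2)})\otimes a\#G_{(1)}$ asserted in the statement; this is the routine part, and in fact it is what pins down $\iota$ uniquely. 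It then remains to see that $\iota$ sends the subspace $C_l(R(A))$ onto the centraliser $C_{A\#H}(A)$. Transporting the collapsed left-centre condition through $\iota$ and comparing it with the commutation relation $c(b\#1)=(b\#1)c$ in the smash product, computed from $(a\#G)(b\#1)=\sum_{(G)} aG_{(1)}(b)\#G_{(2)}$ and $(b\#1)(a\#G)=ba\#G$, should show that both relations cut out the same subspace.

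The main obstacle is precisely this last identification. Since $\iota$ is \emph{not} an algebra homomorphism from the tensor algebra $R(A)$ to the smash product, the left-centre equation and the centraliser equation are not literally the same relation: they differ by the side on which $b$ multiplies inside $A$ and by an antipode twist on the $H$-leg. The equivalence has to be extracted by a Hopf-algebraic manipulation using the antipode axioms and the invertibility of $S$ (running the argument in both directions); concretely, one exploits the leading component of the relation to move the relevant elements of $A$ past one another, which converts the twisted action $S((g_i)_{(2)})(b)$ appearing on one side into the untwisted $(g_i)_{(2)}(b)$ on the other. Once this equivalence of subspaces is established, theorem 5.4 of \cite{da} upgrades the set-theoretic bijection to an isomorphism of algebras in $\YD(H)$, which completes the proof.
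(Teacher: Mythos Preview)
Your proposal is correct and follows essentially the same route as the paper: reduce to $Z(A)\simeq C_l(R(A))$ via theorem~5.4 of \cite{da} and proposition~\ref{rad}, collapse the left-centre condition to $\sum_i g_i\otimes a_ib=\sum_i (g_i)_{(1)}\otimes (g_i)_{(2)}(b)\,a_i$, then transport along $g\otimes a\mapsto a\#S(g)$ and match with the centraliser condition in $A\#H$. The paper asserts the equivalence you flag as the ``main obstacle'' in one line (passing to equation~(\ref{comco})), so your write-up is in fact more candid about where the work lies. One small imprecision: in your closing sentence it is not theorem~5.4 that upgrades the bijection $\iota|_{C_l(R(A))}$ to an algebra isomorphism onto $C_{A\#H}(A)$; theorem~5.4 only gives the algebra structure on $C_l(R(A))$. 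That $\iota$ is multiplicative on this subobject (equivalently, that $a\#h\mapsto S^{-1}(h)\otimes a$ embeds $C_{A\#H}(A)$ as a subalgebra of $H\otimes A$) has to be checked using the centraliser relation itself, which is exactly what the paper remarks immediately after the proof.
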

\begin{proof}
By \cite{da} we can identify $Z(A)$ with the left centre $C_l(R(A))$. By the definition of left centre (see section \ref{moncen}), $\sum_ig_i\otimes a_i$ belongs to $C_l(R(A))$ if for all $h\in H$, $b\in A$
$$\sum_{i}g_ih\otimes a_ib = \sum_{i,(g_i)}(g_i)_{(1)}(h\otimes b).((g_i)_{(2)}\otimes a_i) =$$ 
$$\sum_{i,(g_i)}((g_i)_{(1)}hS((g_i)_{(3)})\otimes (g_i)_{(2)}(b)).((g_i)_{(4)}\otimes a_i) = \sum_{i,(g_i)}(g_i)_{(1)}h\otimes (g_i)_{(2)}(b)a_i$$
or simply if 
$$\sum_{i}g_i\otimes a_ib = \sum_{i,(g_i)}(g_i)_{(1)}\otimes (g_i)_{(2)}(b)a_i,$$
which is equivalent to the condition
\begin{equation}\label{comco}
\sum_{i}g_i\otimes ba_i = \sum_{i,(g_i)}(g_i)_{(1)}\otimes a_iS((g_i)_{(2)})(b),
\end{equation}
Note that this is exactly the commutation condition between $\sum_ia_i\#S(g_i)$ and $b\# 1$ in $A\#H$:
$$(b\#1)(\sum_ia_i\#S(g_i)) = \sum_iba_i\#S(g_i),$$ which corresponds to the left hand side of (\ref{comco}), coincides with 
$$(\sum_ia_i\#S(g_i))(b\#1) = \sum_{i,(S(g_i))}a_iS(g_i)_{(1)}(b)\#S(g_i)_{(2)} = \sum_{i,(g_i)}a_iS((g_i)_{(2)})(b)\#S((g_i)_{(1)}),$$ which corresponds to the right hand side of (\ref{comco}). 
\newline
The form of the Yetter-Drinfeld structure follows from the formulas in proposition \ref{rad}. Indeed, $H$-action on $R(A) = H\otimes A$ (and hence on $C_l(R(A))$) reads
$$h(\sum_ig_i\otimes a_i) = \sum_{i,(h)}h_{(1)}g_iS(h_{(3)})\otimes h_{(2)}(a_i).$$
Hence for the corresponding element of $C_{A\#H}(A)$ we have 
$$h(\sum_ia_i\#S(g_i)) =   \sum_{i,(h)}h_{(2)}(a_i)\# S(h_{(1)}g_iS(h_{(3)})) = $$ $$\sum_{(h),i}h_{(2)}(a_i)\#S^2(h_{(3)})S(g_i)S(h_{(1)}).$$
Similarly $H$-coaction for an element of $R(A) = H\otimes A$ (and hence of $C_l(R(A))$) has a form
$$\psi(\sum_ig_i\otimes a_i) = \sum_{i,(g_i)}(g_i)_{(1)}\otimes (g_i)_{(2)}\otimes a_i\in H\otimes R(A).$$
Hence for the corresponding element of $C_{A\#H}(A)$ we have 
$$\psi(\sum_ia_i\#S(g_i)) =  \sum_{i,(g_i)}(g_i)_{(1)}\otimes a_i\#S((g_i)_{(2)}) =$$ $$\sum_{i,(g_i)}S^{-1}S((g_i)_{(1)})\otimes a_i\#S((g_i)_{(2)}) =   \sum_{i,(S(g_i))}S^{-1}((S(g_i))_{(2)})\otimes a_i\#(S(g_i))_{(1)}.$$
\end{proof} 
It follows from the proof of theorem \ref{main} that the map 
$$A\#H\to H\otimes A,\quad a\#h\mapsto S^{-1}(h)\otimes a$$ induces a homomorphism (embedding) of algebras $C_{A\#H}(A)\to H\otimes A$. In particular, composing this homomorphism with $\varepsilon\otimes 1:H\otimes A\to A$ we get a homomorphism $C_{A\#H}(A)\to A$, which is the canonical homomorphism $Z(A)\to A$ from \cite{da}.

\begin{corr}\label{cor}
The centraliser $C_{A\#H}(A)$ is a commutative algebra in the braided category $\YD(H)$ of Yetter-Drinfeld modules over $H$.
\newline
Moreover, the centraliser $C_{A\#H}(A)$ is invariant with respect to Morita equivalences in $H\da Mod$, i.e. if $A$ and $B$ are two $H$-algebras and $M$ is a $B$-$A$-bimodule, equipped with compatible $H$-action, which induce an equivalence between categories of modules $M$$\otimes_A\da :{_{A}(H\da Mod)}\to{_{B}(H\da Mod)}$ then $C_{A\#H}(A)$ and $C_{B\#H}(B)$ are isomorphic as algebras in $\YD(H)$.
\end{corr}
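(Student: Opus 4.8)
The plan is to derive both statements directly from the abstract properties of the full centre proved in \cite{da}, transported through the identification of Theorem \ref{main}; neither requires new computation. For the commutativity, recall from Section \ref{moncen} that for any algebra in $\YD(H)$ its left centre $C_l(\da)$ is, by its very definition, a quantum commutative subalgebra, that is, a commutative algebra in the braided category $\YD(H)$. In \cite{da} the full centre $Z(A)$ is identified with the left centre $C_l(R(A))$---an identification already invoked in the proof of Theorem \ref{main}---and Theorem \ref{main} in turn identifies $C_l(R(A))$ with $C_{A\#H}(A)$ together with its displayed Yetter-Drinfeld structure. Chaining these identifications shows at once that $C_{A\#H}(A)$ is a commutative algebra in $\YD(H)$.

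For the second statement, I would invoke the Morita invariance of the full centre construction, which is precisely the property for whose sake it was extended to module categories in \cite{da}: if $A$ and $B$ are Morita equivalent algebras in a monoidal category $\cC$, then $Z(A)$ and $Z(B)$ are isomorphic as commutative algebras in the monoidal centre $\cZ(\cC)$. Specialising to $\cC = H\da Mod$, the data in the statement---a $B$-$A$-bimodule $M$ carrying a compatible $H$-action and inducing an equivalence $M\otimes_A\da:{_{A}(H\da Mod)}\to{_{B}(H\da Mod)}$---is exactly a Morita equivalence between $A$ and $B$ in the monoidal category $H\da Mod$. The abstract invariance then yields $Z(A)\cong Z(B)$ in $\cZ(H\da Mod)$, and since $\cZ(H\da Mod)\simeq\YD(H)$ (Section \ref{moncen}), Theorem \ref{main} translates this into the desired isomorphism $C_{A\#H}(A)\cong C_{B\#H}(B)$ of algebras in $\YD(H)$.

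The main obstacle is bookkeeping rather than mathematics. One must confirm that the module-category equivalence phrased in the corollary is the same notion of Morita equivalence used in \cite{da}, and that the braided equivalence $\cZ(H\da Mod)\simeq\YD(H)$ carries commutative algebras and their isomorphisms to quantum commutative algebras and their isomorphisms. Both are routine matters of unwinding the definitions already assembled in Sections \ref{halg} and \ref{moncen}, so I expect the final argument to be a short citation-and-translation, carrying no genuinely new content.
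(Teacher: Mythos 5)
Your argument is exactly the paper's: the printed proof is the single sentence ``It follows from the general properties of the full centre, established in \cite{da}'', and your proposal simply spells out which properties (commutativity of the full centre in the monoidal centre, and its Morita invariance) are being cited and how Theorem \ref{main} transports them to $C_{A\#H}(A)$. No discrepancy; your version is just a more explicit rendering of the same citation-based proof.
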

\begin{proof}
It follows from the general properties of the full centre, established in \cite{da}.
\end{proof}
Due to Morita invariance (and more general definition given in \cite{da}, which works for module categories) it is preferable to talk about full centre of a module category rather than a full centre of an algebra. 

As examples we treat the cases of regular module category and of the module category $\Vect$ with the module structure given by the forgetful functor.
\begin{exam}\label{reg}Full centre of the regular module category.
\end{exam}
As a module category over itself $H\da Mod$ can be identified with the category of modules in $H\da Mod$ over the trivial $H$-algebra $k$. Thus by theorem \ref{main} the full centre $Z(H\da Mod)=Z(k)$ coincides with $H^{op}$ with the adjoint $H$-action 
$g(h) = \sum_{(g)}g_{(1)}hS(g_{(2)})$
and $H$-coaction given by the coproduct (which is isomorphic to the Yetter-Drinfeld structure from theorem \ref{main} by the anti-isomorphism $S$).
This can also be deduced from the fact that the functor $\cZ(H\da Mod)\to \End_{H\da Mod}(H\da Mod)$ coincides with the forgetful functor $\YD(H)\to H\da Mod$. Of course the fact that it is a quantum commutative algebra is well known (see for e.g. \cite{cvz}). 

\begin{exam}Full centre of $Vect$ as a module category.
\end{exam}
The forgetful functor $H\da Mod\to\Vect$ allows us to look at $\Vect$ as a module category over $H\da Mod$, which can be identified with the category of modules in $H\da Mod$ over the $H$-module algebra $H^*$ (see example \ref{ffmc} for a description of $H$-module algebra structure for $H^*$). Thus the full centre $Z(\Vect) = Z(H^*)$ can be identified with the centraliser $C_{H^*\# H}(H^*)$.  If $H$ is finite dimensional the answer can be simplified: in this case $C_{H^*\# H}(H^*)$ coincides with $H^*$. It can be seen in at least two ways. According to \cite{mo} the homomorphism 
$$\theta:H^*\# H\to End(H^*),\quad \theta(l\#h)(m) = lh(m),\quad h\in H, l,m\in H^*$$ is an isomorphism. Thus the centraliser $C_{H^*\# H}(H^*)$ coincides with $End_{H^*}(H^*)=(H^*)^{op}$. 
\newline
Alternatively we can use the fact that the functor $\cZ(H\da Mod)\to \End_{H\da Mod}(\Vect)$ coincides with the forgetful functor $\YD(H)\to H\da Comod$, which in the case of finite dimensional $H$ coincides with the functor $D(H)\da Mod\to H^*\da Mod$, induced by the embedding $H^*\to D(H)$. Then we can apply example \ref{reg} since $D(H) = D(H^*)$ and  the embedding $H^*\to D(H)$ coincides with the embedding $H^*\to D(H^*)$.

\section{Examples}\label{}

Here we apply the formula from theorem \ref{main} to the cases when $H$ is a group Hopf algebra $k[G]$ or its dual $k(G)$. 
Note that a $k[G]$-module algebra $A$  (a {\em $G$-algebra}) is just an (associative, unital) algebra with an action of $G$ by algebra automorphisms. Similarly, a $k(G)$-module algebra is just a {\em $G$-graded algebra}, i.e. a
$G$-graded vector space $A = \oplus_{g\in G}A_g$ with multiplication, which preserves grading
$A_fA_g\subset A_{fg}$. Indeed $A_g = p_gA$, where $p_g\in k(G)$ is a $\delta$-function on $G$ with the support at $g$. We denote by $|a|$ the degree (in $G$) of a homogeneous element $a\in A$, i.e. $a\in A_{|a|}$.

We start with recalling (for example from \cite{da}) a description of the monoidal centre $\cZ(k[G]\da Mod) = \cZ(k(G)\da Mod)$. 
We call a $G$-action on a vector space $V$ {\em compatible} with a $G$-grading $V = \oplus_{g\in G}V_g$ if $f(V_g) = V_{fgf^{-1}}$.
The following result is well-know.
\begin{prop}\label{mcgr}
The monoidal centre $\cZ(k[G]\da Mod)$ is isomorphic, as braided monoidal category, to the category $\cZ(G)$, whose objects
are $G$-graded vector spaces $X = \oplus_{g\in G}X_g$ together with a compatible $G$-action and with morphisms, which are graded
and action preserving homomorphisms of vector spaces. The tensor product in $\cZ(G)$ is the tensor product of
$G$-graded vector spaces with the $G$-action defined by
\begin{equation}\label{tp}
f(x\otimes y) = f(x)\otimes f(y),\quad x\in X, y\in Y.
\end{equation}
The monoidal unit is $I=I_e=k$ with trivial $G$-action.
\newline
The braiding is given by
\begin{equation}\label{br}
c_{X,Y}(x\otimes y) = f(y)\otimes x,\quad x\in X_f, y\in Y.
\end{equation}
\newline
For $Z\in\cZ(G)$ and $U\in\cC(G)$ the half-braiding $z_U:Z\otimes U\to U\otimes Z$ is given by
\begin{equation}\label{hb}
z_U(z\otimes u) = u\otimes g^{-1}(z),\quad u\in U_g. 
\end{equation}
\end{prop}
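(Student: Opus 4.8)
The plan is to deduce Proposition \ref{mcgr} from the general identification $\cZ(H\da Mod)\cong\YD(H)$, together with its comodule counterpart $\cZ(H\da Comod)\cong\YD(H)$ recalled in Section \ref{moncen}, specialised to the group Hopf algebra $H=k[G]$. The first step is to unwind the two module-theoretic notions in this case. Since the coproduct of $k[G]$ is determined by $\Delta(g)=g\otimes g$ on group-like elements, a left $k[G]$-module is exactly a vector space with a $G$-action, whereas a left $k[G]$-comodule $\rho\se V\to k[G]\otimes V$ is exactly a $G$-grading $V=\oplus_{g\in G}V_g$ with $\rho(v)=\sum_g g\otimes v_g$. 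Hence a Yetter-Drinfeld module over $k[G]$ is precisely a $G$-graded vector space equipped with a $G$-action, and a morphism in $\YD(k[G])$ is a graded, $G$-equivariant linear map. This recovers the underlying objects and morphisms of $\cZ(G)$.

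The key computational step is to check that the Yetter-Drinfeld compatibility (\ref{com}) specialises to the compatibility $f(V_g)=V_{fgf^{-1}}$. I would substitute a group-like $h=f$ (so $h_{(1)}=h_{(2)}=f$) and a homogeneous $m\in V_g$ (so $m_{(-1)}=g$, $m_{(0)}=m$) into (\ref{com}): the left-hand side becomes $fg\otimes f(m)$, while the right-hand side becomes $|f(m)|\,f\otimes f(m)$, where $|f(m)|$ denotes the degree of $f(m)$. Comparing the $k[G]$-components forces $fg=|f(m)|\,f$, that is $f(m)\in V_{fgf^{-1}}$, which is exactly the stated compatibility (the reverse inclusion comes from the same computation applied to $f^{-1}$). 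This is the only place where the structural content of (\ref{com}) is used.

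It then remains to match the remaining data. The monoidal unit $k$ carries the trivial grading and action, giving $I=I_e$. The tensor product of Yetter-Drinfeld modules carries the diagonal action (since $\Delta(f)=f\otimes f$) and the product grading (since $(x\otimes y)_{(-1)}=x_{(-1)}y_{(-1)}$), which is precisely (\ref{tp}). The braiding of $\YD(k[G])$, namely $c(m\otimes n)=\sum m_{(-1)}n\otimes m_{(0)}$, evaluates on $x\in X_f$ to $f(y)\otimes x$, giving (\ref{br}). Finally, taking the base category $\cC(G)$ to be $k(G)\da Mod\cong k[G]\da Comod$ (graded vector spaces) and using the comodule-centre functor $\YD(k[G])\to\cZ(k[G]\da Comod)$ from Section \ref{moncen}, the half-braiding applied to $z\otimes u$ with $u\in U_g$ evaluates to $u\otimes g^{-1}(z)$ (using $S(g)=g^{-1}$), which is (\ref{hb}). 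Since all these functors are mutually inverse identifications of structure, the resulting braided monoidal functor $\cZ(G)\to\cZ(k[G]\da Mod)$ is an equivalence, and the coincidence $\cZ(k[G]\da Mod)=\cZ(k(G)\da Mod)$ follows because both realise the same braided category $\cZ(G)$.

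I expect the main obstacle to be the bookkeeping of the two dual realisations (modules versus comodules) in a consistent way: in particular, making sure that the antipode $S(g)=g^{-1}$ lands the half-braiding (\ref{hb}) on $g^{-1}(z)$ rather than $g(z)$, and that the braiding (\ref{br}) and half-braiding (\ref{hb}) each use the grading of the correct tensor factor. The Yetter-Drinfeld computation itself is entirely routine once group-like elements are substituted.
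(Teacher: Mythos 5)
Your argument is correct, and it follows the route the paper itself intends: the paper states Proposition \ref{mcgr} without proof (as ``well-known'', deferring to \cite{da0}), and the natural way to supply one is exactly your specialisation of the equivalences $\cZ(H\da Mod)\cong\YD(H)\cong\cZ(H\da Comod)$ from Section \ref{moncen} to $H=k[G]$, with group-likes giving actions, comodules giving gradings, and (\ref{com}) reducing to $f(V_g)=V_{fgf^{-1}}$. You also correctly identify the one genuinely delicate point, namely that formula (\ref{hb}) refers to the grading of $U$ and hence is the half-braiding in the comodule realisation $\cZ(k(G)\da Mod)=\cZ(k[G]\da Comod)$, transported through the identification with $\cZ(k[G]\da Mod)$.
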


As an immediate application we have the following (see \cite{da0} for details).
An algebra in the category $\cZ(G)$ is a $G$-graded associative algebra $C$ together with a $G$-action such that
\begin{equation}\label{ah}
f(ab) = f(a)f(b),\quad a,b\in C.
\end{equation}
An algebra $C$ in the category $\cZ(G)$ is commutative iff
\begin{equation}\label{co}
ab = f(b)a,\quad \forall a\in C_f, b\in C.
\end{equation}

Now we are ready to describe full centres of $G$-algebras. 
\begin{prop}\label{fcga}
The full centre $Z(A)\in\cZ(G)$ of a $G$-algebra has the $G$-grading $Z(A) = \oplus_{g\in G}Z_g(A)$, where
$$Z_g(A) = \{ x\in A|\ xa=g(a)x\ \forall a\in A\}$$
with the $G$-action, induced from $A$. 
\end{prop}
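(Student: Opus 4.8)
The plan is to specialise theorem \ref{main} to the group Hopf algebra $H=k[G]$ and then to read off the grading and the action through the dictionary of proposition \ref{mcgr}. First I would record the structure of the smash product. Since every $g\in G$ is group-like, we have $\Delta(g)=g\otimes g$ and $S(g)=g^{-1}$, so the multiplication of section \ref{halg} becomes
$$(a\#g)(b\#h) = a\,g(b)\#gh,$$
which exhibits $A\#k[G]$ as the skew group algebra of the $G$-action on $A$. Writing a general element as $\sum_g x_g\#g$ with $x_g\in A$ and comparing the coefficients of each $g$ in the two products $(b\#1)(\sum_g x_g\#g)$ and $(\sum_g x_g\#g)(b\#1)$, I find that $\sum_g x_g\#g$ lies in $C_{A\#H}(A)$ precisely when
$$b\,x_g = x_g\,g(b),\quad \forall b\in A,\ \forall g\in G.$$
By theorem \ref{main} this centraliser is exactly $Z(A)$.

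Next I would extract the $G$-grading. Under the identification of $\YD(k[G])$ with $\cZ(G)$ of proposition \ref{mcgr}, the $H$-coaction is the $G$-grading. Specialising the coaction formula of theorem \ref{main} to $H=k[G]$ gives $\psi(a\#g)=g^{-1}\otimes a\#g$, so the homogeneous component of degree $g$ of $C_{A\#H}(A)$ is spanned by the elements $a\#g^{-1}$ with $a\in A$. Moreover the canonical homomorphism $C_{A\#H}(A)\to A$ described right after the proof of theorem \ref{main} sends $a\#g^{-1}$ to $\varepsilon(g^{-1})a=a$, and this identifies $Z_g(A)\subseteq A$ with the set of $a\in A$ satisfying $b\,a = a\,g^{-1}(b)$ for all $b$. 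Substituting $b=g(a')$ converts this relation into $g(a')\,a = a\,a'$, that is $a\,a' = g(a')\,a$ for all $a'\in A$, which is precisely the defining condition $xa=g(a)x$ of the statement (with $x=a$).

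Finally I would check the action. Specialising the $H$-action formula of theorem \ref{main} to $H=k[G]$, using $S^2(h)=h$, yields $h(a\#g)=h(a)\#hgh^{-1}$. Restricted to the degree-$g$ summand this reads $h(a\#g^{-1})=h(a)\#h g^{-1}h^{-1}$, an element of degree $hgh^{-1}$ whose image in $A$ under the canonical map is $h(a)$. Hence the $G$-action on $\bigoplus_g Z_g(A)$ is the one induced from $A$ and automatically satisfies the compatibility $h(Z_g(A))=Z_{hgh^{-1}}(A)$ demanded of objects of $\cZ(G)$, which finishes the description.

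The only genuinely delicate point is the antipode bookkeeping: because of the $S^{-1}$ appearing in the coaction, the degree-$g$ summand corresponds to the group element $g^{-1}$, and the centraliser relation $b\,a = a\,g^{-1}(b)$ must be rewritten — via the substitution $b\mapsto g(a')$ — into the symmetric relation $xa=g(a)x$ of the statement. Everything else is a direct specialisation of the formulas already established in theorem \ref{main}.
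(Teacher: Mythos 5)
Your proposal is correct and follows essentially the same route as the paper: specialise Theorem \ref{main} to $H=k[G]$, compute the commutator of $\sum_{g}x_g\# g$ with $a\#1$ in the skew group algebra, read the grading off the coaction $\psi(x_g\# g)=g^{-1}\otimes x_g\# g$, and perform the same substitution converting $ax_g=x_g g(a)$ into the degree-matching relation $x_g a=g^{-1}(a)x_g$. The only difference is that you also verify explicitly that the $G$-action is the one induced from $A$ and is compatible with the grading, a point the paper leaves implicit.
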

\begin{proof}
An element $\sum_{g\in G}x_g\#g$of $A\#k[G]$ commutes with $a\#1$ iff for any $g\in G$ we have $ax_g = x_gg(a)$. Indeed,
$$[\sum_{g\in G}x_g\#g,a\#1] = \sum_{g\in G}(x_gg(a)-ax_g)\#g.$$ The degree of $x_g$ is $g^{-1}$. Indeed, 
$$\psi(\sum_{g\in G}x_g\#g) = \sum_{g\in G}g^{-1}\otimes x_g\#g.$$
Finally note that $ax_g = x_gg(a)$ for $x_g\in Z_{g^{-1}}$ is equivalent to $x_ga = g^{-1}(a)x_g$.
\end{proof}

Full centres of $G$-graded algebras are described by the following statement. 
\begin{prop}\label{}
Let $A$ be a $G$-graded algebra. The full centre of $A$ as an object of $\cZ(G)$ is the subspace of the space of functions $G\to A$ with homogeneous values:  
$$Z(A) = \{ z:G\to A|\ \ az(g) = z(hg)a, \forall a\in A_h\}.$$
The $G$-grading on $Z(A)$ is given by $$Z(A)_f = \{ z\in Z(A)|\ |z(g)|=g|z(e)|g^{-1} = gfg^{-1}\}.$$ The $G$-action is $g(z)(f) = z(g^{-1}f)$. 
\newline
The map $Z(A)\to A$ is the evaluation $z\mapsto z(e)$. 
\end{prop}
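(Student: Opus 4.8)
This proposition is the $k(G)$-analogue of Proposition~\ref{fcga} (the $k[G]$-case), so my strategy is to apply Theorem~\ref{main} to the $H$-module algebra $A$ with $H=k(G)$, compute the centraliser $C_{A\# k(G)}(A)$ explicitly, and then translate the resulting Yetter--Drinfeld structure into the $\cZ(G)$-language of graded vector spaces with compatible $G$-action via Proposition~\ref{mcgr}. The main conceptual step is identifying the smash product $A\# k(G)$ concretely. Since $A=\oplus_{g}A_g$ is $G$-graded and $k(G)$ acts by $p_f(a)=\delta_{f,|a|}a$ on homogeneous $a$, and since $k(G)$ is commutative with comultiplication $\Delta(p_f)=\sum_{xy=f}p_x\otimes p_y$, I first unwind the smash-product multiplication $(a\# p_f)(b\# p_h)=\sum a\,(p_f)_{(1)}(b)\# (p_f)_{(2)}p_h$ and record it in terms of homogeneous components.

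First I would make the identification of objects: an element of $A\# k(G)$ is a function $G\to A$, written $z$, via $z=\sum_{f}z(f)\# p_f$. Under this dictionary I compute the product and the comodule structure $\psi$, and then impose the centraliser condition from Theorem~\ref{main}, namely that $z$ commute with every $b\# 1$ for $b\in A$. Writing out $(b\#1)z$ and $z(b\#1)$ and equating homogeneous pieces, the commutation $(b\#1)z=z(b\#1)$ for $b\in A_h$ should collapse—after using that $p_f$ picks out degree—to the stated condition $a\,z(g)=z(hg)\,a$ for all $a\in A_h$; this is the heart of the computation and where I expect the indexing to require the most care, since the $G$-grading shift by $h$ on the argument of $z$ is exactly what the convolution in $k(G)$ produces.

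Next I would extract the grading. Theorem~\ref{main} gives the $H$-coaction $\psi(\sum a_i\# g_i)=\sum S^{-1}((g_i)_{(2)})\otimes a_i\#(g_i)_{(1)}$; specialising to $H=k(G)$ with $S(p_f)=p_{f^{-1}}$ and $\Delta(p_f)=\sum_{xy=f}p_x\otimes p_y$ converts the $H$-coaction into a $G$-grading via the equivalence $\YD(k(G))\simeq\cZ(G)$ of Proposition~\ref{mcgr}. Tracking the degree of the basis element $z$ supported on a single $p_f$ with homogeneous value should yield that the degree of $z$ in $\cZ(G)$ is determined by requiring $|z(g)|=g|z(e)|g^{-1}$, i.e. the grading component $Z(A)_f$ consists of those $z$ with $|z(g)|=gfg^{-1}$. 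Likewise the $H$-action of Theorem~\ref{main} translates, under the compatible-action formula, into $g(z)(f)=z(g^{-1}f)$; I would verify this is exactly the $\cZ(G)$-action making the grading and action compatible in the sense $f(V_g)=V_{fgf^{-1}}$.

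Finally, the canonical map $Z(A)\to A$ from Theorem~\ref{main}—obtained by composing the embedding $C_{A\# H}(A)\to H\otimes A$, $a\# h\mapsto S^{-1}(h)\otimes a$, with $\varepsilon\otimes 1$—should reduce, for $H=k(G)$ where $\varepsilon(p_f)=\delta_{f,e}$, precisely to evaluation $z\mapsto z(e)$, closing the proposition. The only genuine obstacle I anticipate is the bookkeeping in the central commutation step: one must be vigilant that the antipode $S$ on $k(G)$ is inversion on $G$ and that the convolution product of $\delta$-functions introduces the group-multiplication shift correctly, so that the defining relation emerges as $a\,z(g)=z(hg)\,a$ rather than with a misplaced inverse. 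Once the dictionary between $A\# k(G)$ and functions $G\to A$ is fixed, everything else is a faithful specialisation of the general formulas.
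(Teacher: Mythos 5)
Your overall strategy is exactly the paper's: identify $A\#k(G)$ with functions $z\colon G\to A$ via $z\mapsto\sum_{g}z(g)\#p_g$, impose commutation with $b\#1$ for homogeneous $b\in A_h$ to obtain $az(g)=z(hg)a$, and read the $\cZ(G)$-structure off the Yetter--Drinfeld formulas of Theorem \ref{main}. The commutator computation you anticipate (using that $p_u(b)=\delta_{u,h}b$ shifts the index by $h$) and the identification of the canonical map $Z(A)\to A$ with evaluation at $e$ (since $\varepsilon(p_f)=\delta_{f,e}$) are both set up correctly and match the paper.

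There is, however, one concrete error in your translation step: your dictionary between $\YD(k(G))$ and $\cZ(G)$ is inverted. A $k(G)$-\emph{module} structure is an action of the orthogonal idempotents $p_g$, hence an eigenspace decomposition, i.e.\ the $G$-\emph{grading}; a $k(G)$-\emph{comodule} structure is dual to a $k[G]$-module structure, i.e.\ the $G$-\emph{action}. So it is the $H$-action formula of Theorem \ref{main} that yields the grading condition --- one computes $p_g(\sum_f z(f)\#p_f)=\sum_f p_{fgf^{-1}}(z(f))\#p_f$, whence $z$ has degree $g$ iff $|z(f)|=fgf^{-1}$ --- and it is the $H$-coaction $\psi$ that yields the translation action $g(z)(f)=z(g^{-1}f)$. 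You assign the coaction to the grading and the action to the $G$-action, which is exactly backwards; carried out literally, your specialisation would attach the wrong formula to each piece of structure (e.g.\ the ``grading'' you would extract from $\psi$ is in fact the shift $z(\cdot)\mapsto z(g^{-1}\cdot)$). The fix is simply to swap the two assignments, after which your argument coincides with the paper's proof.
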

\begin{proof}
An element $\sum_{g\in G}z(g)\#p_g$ of $k(G)\#A$ commutes with $a\#1\in A_h\#1$ iff $az(g) = z(hg)a$ for all $f\in G$. Indeed, since for $a\in A_h$ 
$$(z\#p_g)(a\#1) = \sum_{uv=g}zp_u(a)\#p_v = za\#p_{h^{-1}g}$$
we have 
$$[\sum_{g\in G}z(g)\#p_g,a\#1] = \sum_{g\in G}(z(hg)a-az(g))\#p_g.$$
To write the $G$-action on $Z(A)$ note that since
$$\psi(\sum_{g\in G}z(g)\#p_g) = \sum_{{g,u,v\in G}\atop{uv=g}}p_{u^{-1}}\otimes z(g)\#p_v,$$
we have
$$g(\sum_{f\in G}z(f)\#p_f) = \sum_{f\in G}z(f)\#p_{gf} = \sum_{f\in G}z(g^{-1}f)\#p_f.$$
To verify the grading condition note that
$$p_g(\sum_{f\in G}z(f)\#p_f) = \sum_{{f,g_1,g_2,g_3\in G}\atop{g_1g_2g_3=g}}p_{g_2}(z(f))\#p_{g_3}p_fp_{g_1^{-1}} = \sum_{f\in G}p_{fgf^{-1}}(z(f))\# p_f.$$
Thus $\sum_{f\in G}z(f)\#p_f$ is homogeneous of degree $g$ iff $|z(f)| = fgf^{-1}$. 
\end{proof}

\begin{rem}
\end{rem}
The descriptions of full centres for $G$-graded and $G$-algebras obtained above are in complete agreement with the corresponding results of \cite{da} (section 9).

\section*{Acknowledgment}

The work on the note began during author's visit to S\~ao Paulo, where the author was accommodated by the Institute of Mathematics and Statistics of the University of S\~ao Paulo, was continued in 2009, while the author was visiting Macquarie University, and finished at Max Planck Institut F\"ur Mathematik (Bonn) in winter 2010. The author would like to thank these institutions for hospitality and excellent working conditions. The author would like to thank FAPESP (grant no. 2008/10526-1), Safety Net Grant of Macquarie University and Max Planck Gesellschaft, whose  financial support made the visits to Bonn, S\~ao Paulo and Sydney possible. The idea to write the note came to the author during the ``Colloquium on Hopf algebras, Quantum groups and Tensor categories" (La Falda, Argentina, September 2009). The author thanks Nicol\'as Andruskiewitsch for the invitation. The author also would like to thank J\"urgen Fuchs, Vyacheslav Futorny, Sonia Natale, Ingo Runkel, and York Sommerh\"auser for inspiring conversations. The author thanks Gigel Militaru and York Sommerh\"auser for pointing out to the references \cite{cm,ra,so,sz}.


\begin{thebibliography}{11}
\bibitem{cm}
S. Caenepeel, G. Militaru, Z. Shenglin, Crossed modules and Doi-Hopf modules, Israel J. Math. 100 (1997), 221-247.
%
\bibitem{cvz}
S. Caenepeel, F. Van Oystaeyen and Y. H. Zhang, Quantum Yang-Baxter module algebras, 
K-Theory 8 (1994), 231Ð255.
%
\bibitem{da0}
A. Davydov, Modular invariants for group-theoretic modular data I. Journal of Algebra 323 (2010), pp. 1321-1348, 
\newline
arXiv:0908.1044
%
\bibitem{da}
A. Davydov, Centre of an algebra. 
http://arxiv.org/abs/0908.1250
%
\bibitem{ka}
C. Kassel, ÒQuantum GroupsÓ, Graduate Texts Math. 155, Springer Verlag, Berlin, 1995.
%
\bibitem{kr}
L. Kong, I. Runkel,
Morita classes of algebras in modular tensor categories,
Adv. Math. 219 (2008) 1548-1576. 
%
\bibitem{mo}
S. Montgomery, Hopf Algebras and Their Actions on Rings, CBMS Lecture Notes vol 82, American Math Society, Providence, RI, 1993, 238 pp.
%
\bibitem{os}
V. Ostrik, Module categories, weak Hopf algebras and modular invariants. Transform. Groups 8 (2003), no. 2, 177--206.
%
\bibitem{ra}
D. Radford, 
On oriented quantum algebras derived from representations of the quantum double of a finite-dimensional Hopf algebra. 
J. Algebra 270 (2003), no. 2, 670--695. 
%
\bibitem{so}
Y. Sommerh\"auser, 
Yetter-Drinfel'd Hopf algebras over groups of prime order. 
Lecture Notes in Mathematics, 1789. Springer-Verlag, Berlin, 2002. 158 pp.
%
\bibitem{sz}
Y. Sommerh\"auser, Y. Zhu, Hopf Algebras and Congruence Subgroups,
\newline
arXiv:0710.0705
%
\end{thebibliography}
\end{document}